\newtheoremstyle{neu_thm}
{13pt}       
{8pt}      
{\itshape}  
{}          
{\bfseries} 
{.}         
{.5em}      
{}          
\newtheoremstyle{neu_defn}
{13pt}       
{8pt}      
{}  
{}          
{\bfseries} 
{.}         
{.5em}      
{}          
\theoremstyle{neu_thm}
\newtheorem{thm}{Theorem}[section]
\newtheorem{cor}[thm]{Corollary}
\newtheorem{lem}[thm]{Lemma}
\newtheorem{prop}[thm]{Proposition}
\theoremstyle{neu_defn}
\newtheorem{defn}[thm]{Definition}
\newtheorem{rem}[thm]{Remark}
\newtheorem{ex}[thm]{Example}
\newtheorem{ass}[thm]{Assumption}
\numberwithin{equation}{section}
      \def\dC{{\mathbb C}}
      \def\cF{{\mathcal F}}
      \def\cL{{\mathcal L}}
\def\cM{{\mathcal M}}
\def\dim{{\rm dim\,}}
\def\diag{{\rm diag\,}}
\def\rank{{\rm rank\,}}
 \def\ker{{\xker\,}}
 \newcommand{\Skindef}{[\raisebox{0.5 ex}{.},\raisebox{0.5 ex}{.}]}
\newcommand{\matriz}[4]{\left[\begin{array}{cc}
#1 & #2\\
#3 & #4
\end{array} \right]}
\newcommand{\ra}{\rightarrow}
\newcommand{\PI}[2]{\left\langle #1, #2\right\rangle}
\newcommand{\K}[2]{\left[ #1, #2\right]}
\DeclareMathOperator{\linspan}{span}
\newcommand\void[1]{}
\newenvironment{smallpmatrix}
{\left(\begin{smallmatrix}}
{\end{smallmatrix}\right)}
\newenvironment{smallbmatrix}
{\left[\begin{smallmatrix}}
{\end{smallmatrix}\right]}%
\newcommand{\ol}{\overline}
\newcommand{\R}{\ensuremath{\mathbb R}}    
\newcommand{\C}{\ensuremath{\mathbb C}}    
\newcommand{\calS}{\mathcal S}
\newcommand{\la}{\lambda}
\newcommand{\veps}{\varepsilon}
\newcommand{\vek}[2]
{
   \begin{pmatrix}
      #1\\
      #2
   \end{pmatrix}
}
\newcommand{\smallvek}[2]{\left(\begin{smallmatrix}#1\\#2\end{smallmatrix}\right)}
\renewcommand{\Re}{\operatorname{Re}\,}
\renewcommand{\ker}{\operatorname{ker}\,}
\DeclareMathOperator{\sgn}{sgn}
\newcommand{\setdef}[2]{\left\{\ #1\ \left|\ \vphantom{#1} #2\ \right.\right\}}
\begin{document}
\title[On a class of non-Hermitian matrices]{On a class of non-Hermitian matrices with positive definite Schur complements}

\author[T.~Berger]{Thomas Berger}
\address{Fachbereich Mathematik, Universit\"{a}t Hamburg, Bundesstrasse 55, D-20146 Hamburg, Germany}
\email{thomas.berger@uni-hamburg.de}

\author[J.~Giribet]{Juan Giribet}
\address{Departamento de Ingenier\'ia Electr\'onica y Matem\'atica -- Universidad de Buenos Aires and Instituto Argentino de Matem\'{a}\-tica ``Alberto P. Calder\'{o}n'' (CONICET), Saavedra 15 (1083) Buenos Aires, Argentina}
\email{jgiribet@fi.uba.ar}

\author[F.~Mart\'{\i}nez Per\'{\i}a]{Francisco Mart\'{\i}nez Per\'{\i}a}
\address{Centro de Matem\'{a}tica de La Plata (CeMaLP) -- FCE-UNLP, La Plata, Argentina \\
and Instituto Argentino de Matem\'{a}tica ``Alberto P. Calder\'{o}n'' (CONICET), Saavedra 15 (1083) Buenos Aires, Argentina}
\email{francisco@mate.unlp.edu.ar}

\author[C.~Trunk]{Carsten Trunk}
\address{Institut f\"ur  Mathematik, Technische Universit\"{a}t Ilmenau, Postfach 100565, D-98684 Ilmenau, Germany\\
and Instituto Argentino de Matem\'{a}tica ``Alberto P. Calder\'{o}n'' (CONICET), Saavedra 15 (1083) Buenos Aires, Argentina}
\email{carsten.trunk@tu-ilmenau.de}

\subjclass[2010]{Primary 15A83; Secondary 15A23, 15B48} 

\begin{abstract}
Given Hermitian matrices $A\in \dC^{n\times n}$ and $D\in \dC^{m\times m}$, and
 $\kappa>0$, we characterize under which conditions there exists a matrix $K\in \dC^{n\times m}$ with $\|K\|<\kappa$ such that the non-Hermitian block-matrix
\[
\matriz{A}{-AK}{K^*A}{D}
\]
has a positive (semi-)definite Schur complement with respect to its submatrix~$A$. Additionally, we show
that~$K$ can be chosen such that diagonalizability of the block-matrix is guaranteed and we compute its spectrum.
Moreover, we show a connection to the recently developed frame theory for Krein spaces.
\end{abstract}

\maketitle

\vspace*{-5mm}

\section{Introduction}

Given a matrix $S\in\dC^{(n+m)\times (n+m)}$ assume it is partitioned as
\[
S=\matriz{A}{B}{C}{D},
\]
where $A\in \dC^{n\times n}$, $B\in\dC^{n\times m}$, $C\in\dC^{m\times n}$ and $D\in \dC^{m\times m}$. If $A$ is invertible, then the \emph{Schur complement of $A$ in $S$} is defined by
\[
S_{/A}:=D-CA^{-1}B.
\]
This terminology is due to Haynsworth~\cite{Hay68,Hay68LAA}, but the use of such a construction goes back to Sylvester~\cite{Syl} and Schur~\cite{Schur}.
The Schur complement arises, for instance, in the following factorization of the block matrix $S$:
\begin{equation}\label{factor}
	\matriz{A}{B}{C}{D}=\matriz{I_n}{0}{CA^{-1}}{I_m}\matriz{A}{0}{0}{D-CA^{-1}B}\matriz{I_n}{A^{-1}B}{0}{I_m},
\end{equation}
which is due to Aitken \cite{Aitken}; note that $I_k$ denotes the identity matrix in
 $\dC^{k\times k}$. It is a common argument in the proof of the \emph{Schur determinant formula} \cite{Ban}:
\begin{equation}\label{eq:Schurformula}
\det(S)=\det(A)\cdot \det(S_{/A}),
\end{equation}
of the \emph{Guttman rank additivity formula} \cite{Guttman}, and of the \emph{Haynsworth inertia additivity formula} \cite{HO68}.

The Schur complement has been generalized for example to non-invertible $A$.
 In this case, if $A^\dagger$ is the Moore-Penrose inverse of $A$, then the Schur complement $S_{/A}$ is defined by $S_{/A}=D-CA^{\dagger}B$.
It is a key tool not only in matrix analysis but also in applied fields such as numerical analysis and statistics. For further details see \cite{Zhang}.

If $A$ is invertible and $S$ is a Hermitian matrix, then $C=B^*$ and the Schur complement of $A$ in $S$ is $S_{/A}=D-B^*A^{-1}B$. Then \eqref{factor} reads as
\begin{equation*} \matriz{A}{B}{B^*}{D}=\matriz{I_n}{A^{-1}B}{0}{I_m}^*\matriz{A}{0}{0}{D-B^*A^{-1}B}\matriz{I_n}{A^{-1}B}{0}{I_m}\!,
\end{equation*}
which implies the following well-known criteria:
$S$ is positive definite if and only if $A$ and $S_{/A}$ are both positive definite. This equivalence is not true for positive semidefinite matrices, but Albert~\cite{Albert} showed that $S$ is positive semidefinite if and only if $A$ and $S_{/A}$ are both positive semidefinite and $R(B)\subseteq R(A)$, where $R(X)$ stands for the range of a matrix $X$.

In this paper, given $\kappa>0$, a Hermitian matrix $A\in\dC^{n\times n}$ with eigenvalues
$\lambda_1\geq\ldots\geq\la_k>0\ge\la_{k+1}\ge\ldots\ge\la_n$, and a Hermitian matrix  $D\in\dC^{m\times m}$
with eigenvalues $\mu_1\leq\ldots\leq\mu_r\leq 0< \mu_{r+1}\leq \ldots \le \mu_m$
we investigate under which conditions there exists a matrix
 $K\in\dC^{n\times m}$ with $\|K\|< \kappa$ such that
\begin{equation}\label{eq:blockS}
S=\matriz{A}{-AK}{K^*A}{D}
\end{equation}
has a positive (semi-)definite Schur complement $S_{/A}$ with respect to the submatrix $A$. Note that
$$
S_{/A} = D+K^*(AA^\dagger A)K= D+K^*AK.
$$

Interest in such non-Hermitian block-matrices arises, for instance, in the recently developed frame theory in Krein
spaces, see~\cite{GLLMMT, GMMM12}. There, block-matrices as in \eqref{eq:blockS} with a
positive definite $A$, a Hermitian $D$ and a positive definite $S_{/A}$ correspond to so-called $J$-frame operators, see Section~\ref{Krein}.

In Theorem~\ref{Thm:exist-contr2} below
we show that this special \emph{structured matrix completion problem}
has a solution if and only if
\begin{equation*}
	r\leq k \ \ \ \text{and} \ \ \ \kappa^2 \lambda_i + \mu_i>0 \quad \text{for all $i=1,\ldots,r-p$,}
\end{equation*}
where $p=\dim (\ker D)$; this condition may be slightly relaxed if only positive semidefinite $S_{/A}$ is required.
We stress that~$S$ is not diagonalizable in general, not even if $S_{/A}$ is positive definite. Under the above conditions, we construct a particular matrix~$K$, which depends on some parameters $\veps_1,\ldots,\veps_r$. In Theorems~\ref{Thm:SpecS} and~\ref{Thm:SpecS2} we compute the eigenvalues of the corresponding block matrix~$S$ in terms of the eigenvalues of~$A$ and~$D$ and the parameters $\veps_1,\ldots,\veps_r$. A root locus analysis of the latter reveals that
if each~$\veps_i$ is small enough, then~$S$ is diagonalizable and has only real eigenvalues, although~$S$ is non-Hermitian.

\section{Preliminaries}

Given Hermitian matrices $A,B\in\dC^{n\times n}$, various different relations between the eigenvalues of $A$, $B$ and $A + B$
can be obtained, see e.g.~\cite{B97,HJ,HJ2}.
The following result was first proved by Weyl, see e.g.~\cite{B97}.

\begin{thm}\label{Thm:Weyl}
Let $A,B\in\dC^{n\times n}$ be Hermitian matrices. Then,
\begin{align*}
	\lambda_j^\downarrow(A+B) &\leq \lambda_i^\downarrow(A) + \lambda_{j-i+1}^\downarrow(B) \quad \text{for $i\leq j$}; \\
	\lambda_j^\downarrow(A+B) &\geq \lambda_i^\downarrow(A) + \lambda_{j-i+n}^\downarrow(B) \quad \text{for $i\geq j$};	
\end{align*}
where $\lambda_j^\downarrow(C)$ denotes the $j$-th eigenvalue of $C$ (counted with multiplicities) if they are arranged in nonincreasing order.
\end{thm}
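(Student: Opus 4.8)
The plan is to derive both inequalities from the variational (Courant--Fischer) description of the eigenvalues of a Hermitian matrix, the essential input being that for any Hermitian $C\in\dC^{n\times n}$ and any subspace spanned by a consecutive block of eigenvectors, the Rayleigh quotient $\PI{Cx}{x}/\PI{x}{x}$ is controlled by the corresponding eigenvalues. Concretely, let $\{u_s\}_{s=1}^n$, $\{v_s\}_{s=1}^n$ and $\{w_s\}_{s=1}^n$ be orthonormal eigenbases of $A$, $B$ and $A+B$, respectively, each ordered so that the associated eigenvalues are nonincreasing.

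First I would prove the upper inequality, assuming $i\leq j$. Consider the three subspaces
\[
U=\linspan\{u_i,\dots,u_n\},\qquad V=\linspan\{v_{j-i+1},\dots,v_n\},\qquad W=\linspan\{w_1,\dots,w_j\},
\]
whose dimensions are $n-i+1$, $n-j+i$ and $j$. Their codimensions sum to $(i-1)+(j-i)+(n-j)=n-1<n$, so $U\cap V\cap W$ contains a nonzero vector $x$. On $U$ one has $\PI{Ax}{x}\leq \lambda_i^\downarrow(A)\,\PI{x}{x}$, on $V$ one has $\PI{Bx}{x}\leq \lambda_{j-i+1}^\downarrow(B)\,\PI{x}{x}$, and on $W$ one has $\PI{(A+B)x}{x}\geq \lambda_j^\downarrow(A+B)\,\PI{x}{x}$, each bound being immediate from the chosen eigenvector block. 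Since $\PI{(A+B)x}{x}=\PI{Ax}{x}+\PI{Bx}{x}$, evaluating at $x$ gives
\[
\lambda_j^\downarrow(A+B)\,\PI{x}{x}\ \leq\ \bigl(\lambda_i^\downarrow(A)+\lambda_{j-i+1}^\downarrow(B)\bigr)\PI{x}{x},
\]
and dividing by $\PI{x}{x}>0$ yields the first inequality.

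For the lower inequality (the case $i\geq j$) I would not repeat the construction but instead apply the already-established upper bound to the Hermitian matrices $-A$ and $-B$, whose sum is $-(A+B)$. Using the elementary identity $\lambda_s^\downarrow(-C)=-\lambda_{n-s+1}^\downarrow(C)$ and relabelling the indices through $j\mapsto n-j+1$ and $i\mapsto n-i+1$, the constraint $i\leq j$ of the upper bound transforms exactly into $i\geq j$, the sign flips turn the upper estimate into a lower one, and the index on $B$ becomes $j-i+n$ automatically. Thus the second inequality follows formally from the first.

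The main obstacle is purely the index bookkeeping in the dimension count: the three subspaces must be chosen with precisely these dimensions so that, simultaneously, their codimensions sum to $n-1<n$ (guaranteeing a common nonzero vector) and each subspace carries the correct one-sided Rayleigh-quotient bound with the prescribed eigenvalue index. Once the dimensions and the offset $j-i+1$ are set up correctly, the three estimates combine in a single line, and the reduction to $-A,-B$ makes the second inequality a formal consequence of the first.
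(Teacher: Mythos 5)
Your proof is correct: the dimension count $(i-1)+(j-i)+(n-j)=n-1<n$ does force a common unit vector, the three Rayleigh-quotient bounds on $U$, $V$, $W$ are valid, and the passage to $-A,-B$ with $\lambda_s^\downarrow(-C)=-\lambda_{n-s+1}^\downarrow(C)$ turns the index $j-i+1$ into $j-i+n$ exactly as claimed. The paper itself gives no proof of this theorem (it is quoted from Weyl with a reference to Bhatia's \emph{Matrix Analysis}), and your subspace-intersection argument is essentially the standard proof found in that reference, so there is nothing further to reconcile.
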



For a rectangular matrix $A\in \dC^{m\times n}$ with $\rank(A)=r$
denote by
\[
\sigma_1(A)\geq\sigma_2(A)\geq\ldots\geq\sigma_r(A)>0
\]
the singular values of $A$. Recall that $\sigma_i(A)=\la_i^\downarrow(|A|)$ for $i=1,\ldots,r$, where $|A|=(A^*A)^{1/2}$. In particular $\|A\|=\sigma_1(A)$ denotes the spectral norm of~$A$.

Given $A, B\in\dC^{m\times n}$, the following inequalities hold.
If $i\in\{1,\ldots, \rank(A)\}$ and $j\in\{1,\ldots, \rank(B)\}$ are such that $i+j-1\leq \rank(AB^*)$, then
\begin{equation}\label{sing val}
\sigma_{i+j-1}(AB^*)\leq \sigma_i(A)\sigma_j(B),
\end{equation}
see e.g.~\cite[Theorem 3.3.16]{HJ2}. As a consequence of these inequalities we have the following well-known result; for completeness we include a short proof.

\begin{prop}\label{HierIstEsIrreWarm}
Let $A\in \dC^{n\times n}$ be Hermitian with exactly~$k$ positive eigenvalues (counted with multiplicities) and let $K\in \dC^{n\times m}$. Then,
\[
\la_j^\downarrow(K^*AK)\le \|K\|^2 \la_j^\downarrow(A) \quad \text{for $j=1, \ldots, \min\{k,m, \rank(K^*AK)\}$}.
\]
\end{prop}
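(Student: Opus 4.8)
The plan is to reduce the indefinite matrix $K^*AK$ to a positive semidefinite comparison matrix and then feed the multiplicative singular value inequality~\eqref{sing val} into it. First I would split $A$ into its positive and negative parts, writing $A=A_+-A_-$ with $A_+,A_-\ge 0$ Hermitian and $A_+A_-=0$, so that $A_+$ carries exactly the $k$ positive eigenvalues of $A$ (whence $\lambda_j^\downarrow(A_+)=\lambda_j^\downarrow(A)$ for $1\le j\le k$) while the remainder of its spectrum is zero. Since $K^*A_+K-K^*AK=K^*A_-K\ge 0$, the monotonicity of eigenvalues under addition of a positive semidefinite matrix --- which is precisely the second Weyl inequality of Theorem~\ref{Thm:Weyl} applied with $i=j$ to the pair $K^*AK$ and $K^*A_-K$, using that the smallest eigenvalue of $K^*A_-K$ is nonnegative --- yields $\lambda_j^\downarrow(K^*AK)\le\lambda_j^\downarrow(K^*A_+K)$ for every $j$. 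It therefore suffices to bound the eigenvalues of the positive semidefinite matrix $K^*A_+K$.

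Next I would factor $A_+=(A_+^{1/2})^2$ and observe that $K^*A_+K=(A_+^{1/2}K)^*(A_+^{1/2}K)$, so that $\lambda_j^\downarrow(K^*A_+K)=\sigma_j(A_+^{1/2}K)^2$. Applying~\eqref{sing val} to the product $A_+^{1/2}K$ with the two factor-indices chosen as $j$ and $1$ gives $\sigma_j(A_+^{1/2}K)\le\sigma_j(A_+^{1/2})\,\sigma_1(K)=\sigma_j(A_+^{1/2})\,\|K\|$. Since $A_+^{1/2}$ is positive semidefinite, $\sigma_j(A_+^{1/2})=\lambda_j^\downarrow(A_+)^{1/2}=\lambda_j^\downarrow(A)^{1/2}$ for $1\le j\le k$; squaring and combining with the previous paragraph delivers $\lambda_j^\downarrow(K^*AK)\le\|K\|^2\lambda_j^\downarrow(A)$.

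The one genuinely delicate point is the index bookkeeping needed to legitimately invoke~\eqref{sing val}, which requires $j\le\rank(A_+^{1/2}K)=\rank(K^*A_+K)$, a quantity a priori unrelated to the $\rank(K^*AK)$ appearing in the statement. I would dispose of this by a short case distinction. If $\lambda_j^\downarrow(K^*AK)\le 0$, the asserted inequality is trivial, because its right-hand side $\|K\|^2\lambda_j^\downarrow(A)$ is nonnegative for $j\le k$. If instead $\lambda_j^\downarrow(K^*AK)>0$, then $\lambda_j^\downarrow(K^*A_+K)\ge\lambda_j^\downarrow(K^*AK)>0$, so $K^*A_+K$ has at least $j$ strictly positive eigenvalues and hence $\rank(K^*A_+K)\ge j$, which is exactly the condition permitting the application of~\eqref{sing val}; the restrictions $j\le k$ and $j\le m$ in the statement guarantee respectively that $\sigma_j(A_+^{1/2})$ picks up a genuine positive eigenvalue of $A$ and that $\lambda_j^\downarrow(K^*AK)$ is defined. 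A minor technicality worth flagging is that~\eqref{sing val} as quoted is phrased for two matrices of the same size via the product $AB^*$, whereas here the factors $A_+^{1/2}$ and $K$ have sizes $n\times n$ and $n\times m$; this is harmless, since~\eqref{sing val} is just the general multiplicative singular value inequality of \cite[Theorem 3.3.16]{HJ2} written with $B=K^*$, and no squareness of the factors is actually used.
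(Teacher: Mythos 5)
Your proof is correct, but it takes a genuinely different---and in fact more robust---route than the paper's. The paper works directly with $K^*AK$: after discarding the trivial case $K=0$, it estimates, for $j\le\min\{k,m,\rank(K^*AK)\}$,
\[
\la_j^\downarrow(K^*AK)\ \le\ \sigma_j(K^*AK)\ \le\ \sigma_j(K^*A)\,\sigma_1(K^*)\ \le\ \sigma_1(K^*)^2\,\sigma_j(A),
\]
using \eqref{sing val} twice, and then replaces $\sigma_j(A)$ by $\la_j^\downarrow(A)$, justified only by the positivity of $\la_j^\downarrow(A)$ for $j\le k$. You instead split $A=A_+-A_-$, pass to $\la_j^\downarrow(K^*AK)\le\la_j^\downarrow(K^*A_+K)$ by Weyl monotonicity, and only then invoke \eqref{sing val}, applied to $A_+^{1/2}K$. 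This difference is substantive, not cosmetic: for indefinite $A$ one only has $\sigma_j(A)=\la_j^\downarrow(|A|)\ge\la_j^\downarrow(A)$, and the inequality can be strict even when $\la_j^\downarrow(A)>0$ (take $n=m=2$, $A=\diag(1,-5)$, $K=I_2$, $j=1$: then $\sigma_1(A)=5$ while $\la_1^\downarrow(A)=1$, and already $\sigma_1(K^*AK)=5$ exceeds the asserted bound $\|K\|^2\la_1^\downarrow(A)=1$). So the paper's chain, as written, only establishes the weaker estimate with $\sigma_j(A)$ on the right-hand side, and no repair is possible once one has passed to $\sigma_j(K^*AK)$. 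Your truncation to the positive part is exactly what legitimizes the identification of singular values with eigenvalues: since $A_+^{1/2}$ is positive semidefinite, $\sigma_j(A_+^{1/2})^2=\la_j^\downarrow(A_+)=\la_j^\downarrow(A)$ holds genuinely for $j\le k$. Your case distinction is also sound: when $\la_j^\downarrow(K^*AK)\le 0$ the claim is trivial because $\|K\|^2\la_j^\downarrow(A)\ge 0$ for $j\le k$, and when $\la_j^\downarrow(K^*AK)>0$ you get $K\neq 0$ and $\rank(K^*A_+K)\ge j$, which supplies precisely the rank hypotheses needed in \eqref{sing val} (these do not follow from the assumption $j\le\rank(K^*AK)$, so this step could not be skipped). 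In short, your argument is complete and, beyond merely differing from the paper's proof, it actually closes a gap in it.
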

\begin{proof} If $K=0$, then the statement trivially holds, so assume that $K\neq 0$ and hence $\rank(K) \ge 1$. Then, for all $j=1, \ldots, \min\{k,m, \rank(K^*AK)\}$
\begin{align*}
\la_j^\downarrow(K^*AK) \!\leq\! \sigma_j(K^*AK) \!\leq\! \sigma_j(K^*A)\sigma_1(K^*) \!\leq\! \sigma_1(K^*)^2\sigma_j(A) \!=\!  \|K\|^2 \la_j^\downarrow(A),
\end{align*}
because $\la_j^\downarrow(A)$ is positive for $j=1,\ldots,k$.
\end{proof}

\section{Positive (semi-)definiteness of the Schur complement}
\label{Schur semi}

Throughout this work we consider non-Hermitian block matrices~$S$ as in~\eqref{eq:blockS},
where $A\in\dC^{n\times n}$ and $D\in \dC^{m\times m}$ are Hermitian matrices and $K\in \dC^{n\times m}$. In this section we 
characterize the existence of a matrix~$K$ such that 
$S$ in~\eqref{eq:blockS} has a positive definite (positive semidefinite) Schur complement.

\begin{ass}\label{assumption2}
	Let $\la_1\geq\la_2\geq\ldots\geq\la_k>0\ge \la_{k+1}\ge \ldots\ge \la_n$ denote the eigenvalues of~$A$ (counted with multiplicities) arranged in nonincreasing order. Further, let $\mu_1\leq\mu_2\leq\ldots\leq\mu_r\le 0 < \mu_{r+1}\leq \ldots \le \mu_m$ denote the eigenvalues of~$D$ (counted with multiplicities) arranged in nondecreasing order, and assume that $\dim(\ker D)=p$.
\end{ass}

\begin{lem}\label{lem: necessary cond}
Let Assumption~\ref{assumption2} hold.
If $r>k$ then there is no $K\in\dC^{n\times m}$ such that $D+K^*AK$ is positive definite. Moreover, if $r-p>k$ then there is no $K\in\dC^{n\times m}$ such that $D+K^*AK$ is positive semidefinite.
\end{lem}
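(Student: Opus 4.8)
The plan is to deduce both non-existence statements from a single structural fact: for \emph{every} $K\in\dC^{n\times m}$ the matrix $K^*AK$ has at most $k$ positive eigenvalues; equivalently, it is negative semidefinite on a subspace of $\dC^m$ of dimension at least $m-k$. Granting this, each assertion becomes an elementary dimension count — I intersect this large ``non-positive'' subspace for $K^*AK$ with a subspace on which $D$ is (strictly) negative, and show that on the intersection $D+K^*AK$ cannot be positive (semi-)definite.

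For the structural fact, let $P_+$ denote the orthogonal projection onto the span $\calL_+$ of the eigenvectors of $A$ associated with $\la_1,\dots,\la_k$, so that $\dim\calL_+=k$ and $A$ is negative semidefinite on $\calL_+^\perp$. Set $\calN:=\ker(P_+K)$. Since $P_+K$ maps $\dC^m$ into the $k$-dimensional space $\calL_+$, rank-nullity gives $\dim\calN\ge m-k$. For $y\in\calN$ one has $Ky=(I-P_+)Ky\in\calL_+^\perp$, whence $\langle K^*AKy,y\rangle=\langle AKy,Ky\rangle\le 0$; thus $K^*AK$ is negative semidefinite on $\calN$. (Alternatively, writing $A=A_+-A_-$ with $A_+\ge 0$ of rank $k$ and applying Weyl's inequality from Theorem~\ref{Thm:Weyl} to $K^*AK=K^*A_+K-K^*A_-K$ yields $\la_j^\downarrow(K^*AK)\le 0$ for $j\ge k+1$, which says the same thing.)

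For the first claim, assume $r>k$ and suppose, for contradiction, that $D+K^*AK$ is positive definite for some $K$. Let $\calM$ be the span of the eigenvectors of $D$ for $\mu_1,\dots,\mu_r$, so that $\dim\calM=r$ and $\langle Dy,y\rangle\le 0$ for $y\in\calM$. Then $\dim(\calM\cap\calN)\ge r+(m-k)-m=r-k>0$, so there is $y\ne 0$ with $\langle(D+K^*AK)y,y\rangle=\langle Dy,y\rangle+\langle K^*AKy,y\rangle\le 0$, contradicting positive definiteness.

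For the second claim, assume $r-p>k$ and suppose $D+K^*AK$ is positive semidefinite. Now I use the span $\calM'$ of eigenvectors of $D$ for the \emph{strictly} negative eigenvalues $\mu_1,\dots,\mu_{r-p}$, on which $\langle Dy,y\rangle<0$ for $y\ne 0$ and $\dim\calM'=r-p$. Since $\dim(\calM'\cap\calN)\ge (r-p)+(m-k)-m=r-p-k>0$, there is $y\ne 0$ with $\langle(D+K^*AK)y,y\rangle=\langle Dy,y\rangle+\langle K^*AKy,y\rangle<0$, contradicting positive semidefiniteness. The one point to handle with care is precisely this passage from the definite to the semidefinite case: to force a \emph{strict} negative sign I must discard the $p$-dimensional kernel of $D$ and work only on the $(r-p)$-dimensional strictly negative part, which is exactly what turns the threshold $r>k$ into $r-p>k$.
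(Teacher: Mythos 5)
Your proof is correct and is essentially the paper's own argument: your subspace $\calN=\ker(P_+K)$ is exactly the paper's $\calS_1=\ker(K^*(A+|A|)K)$ (both consist of those $y$ for which $Ky$ lies in the non-positive spectral subspace of $A$), and the rest --- the dimension count $\dim(\calM\cap\calN)\geq r+(m-k)-m$ and the intersection with the non-positive (resp.\ strictly negative) eigenspace of $D$, with the kernel of $D$ discarded to get a strict inequality in the semidefinite case --- matches the paper's proof step for step.
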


\begin{proof}
Assume that $r>k$. Given $K\in \dC^{n\times m}$ let $\calS_1=\ker(K^*(A+|A|)K)$ and consider the subspace $\calS_2$ of $\dC^m$ spanned by all eigenvectors of $D$ corresponding to non-positive eigenvalues. Observe that
\[
\dim\calS_1=m-\rank(K^*(A+|A|)K)\geq m-\rank(A+|A|)=m-k.
\]
By Assumption~\ref{assumption2} we have that $\dim\calS_2=r$ and hence
\[
\dim \calS_1 + \dim \calS_2 \geq (m-k) + r = m + (r-k) > m.
\]
Thus, $\calS_1\cap\calS_2\neq\{0\}$ and for any non-trivial vector $v\in\calS_1\cap\calS_2$ we have
\[
\PI{(D+K^*AK)v}{v} = \PI{Dv}{v}-\PI{K^*|A|Kv}{v}\le 0,
\]
because $K^*AKv=-K^*|A|Kv$.
Therefore, $D+K^*AK$ cannot be positive definite.

Moreover, assume that $r-p>k$ and consider the subspace $\calS_3$ of $\dC^m$ spanned by all eigenvectors of $D$ corresponding to negative eigenvalues. Then, $\dim\calS_3=r-p$ and a similar argument shows that $D+K^*AK$ cannot be positive semidefinite.
\end{proof}

The next result characterizes under which conditions there exists a matrix $K\in \dC^{n\times m}$
such that $D + K^*AK$ is positive (semi-)definite.

\begin{thm}\label{Thm:exist-contr2}
Let Assumption~\ref{assumption2} hold. Given $\kappa>0$, the following statements hold.
\begin{enumerate}
	\item[(i)] There exists $K\in \dC^{n\times m}$ with $\|K\|< \kappa$ such that $D + K^*AK$ is positive definite if and only if
\begin{equation} \label{EsoEs3}
	r\leq k\quad \text{and}\quad \kappa^2\lambda_i + \mu_i>0 \quad \text{for all $i=1,\ldots,r-p$}.
\end{equation}
   \item[(ii)] There exists $K\in \dC^{n\times m}$ with $\|K\|\leq \kappa$ such that $D + K^*AK$ is positive semidefinite if and only if
\begin{equation} \label{EsoEs2}
	r-p\leq k\quad \text{and}\quad \kappa^2\lambda_i + \mu_i\geq 0 \quad \text{for all $i=1,\ldots,r-p$}.
\end{equation}
\end{enumerate}
\end{thm}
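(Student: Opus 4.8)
The plan is to establish each of the two equivalences by separating necessity and sufficiency, and to take the spectral conditions $r\le k$ and $r-p\le k$ for free from Lemma~\ref{lem: necessary cond}. What remains on the necessity side are the eigenvalue inequalities, and here I would combine Weyl's inequalities (Theorem~\ref{Thm:Weyl}) with Proposition~\ref{HierIstEsIrreWarm}. Writing $M=K^*AK$, suppose $D+M$ is positive definite, so its smallest eigenvalue $\lambda_m^\downarrow(D+M)$ is positive. Applying the first Weyl inequality with $j=m$ gives $0<\lambda_m^\downarrow(D+M)\le \lambda_i^\downarrow(D)+\lambda_{m-i+1}^\downarrow(M)$ for every $i=1,\dots,m$; re-indexing by $i'=m-i+1$ and using $\lambda_i^\downarrow(D)=\mu_{i'}$ converts this into $\mu_{i'}+\lambda_{i'}^\downarrow(M)>0$ for all $i'=1,\dots,m$.

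For each index $i'\le r-p$ one has $i'\le r-p\le k$, so $\lambda_{i'}=\lambda_{i'}^\downarrow(A)>0$, and it suffices to show $\lambda_{i'}^\downarrow(M)\le \kappa^2\lambda_{i'}$; substituting this into the inequality above yields $\kappa^2\lambda_{i'}+\mu_{i'}>0$. I would prove the bound by a sign case distinction that circumvents the rank restriction in Proposition~\ref{HierIstEsIrreWarm}: if $\lambda_{i'}^\downarrow(M)>0$ then $i'\le\rank(M)$, so the proposition applies and gives $\lambda_{i'}^\downarrow(M)\le\|K\|^2\lambda_{i'}<\kappa^2\lambda_{i'}$; if $\lambda_{i'}^\downarrow(M)\le 0$ the bound is trivial since $\kappa^2\lambda_{i'}>0$. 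Part (ii) is the identical argument with all strict inequalities weakened and $\|K\|\le\kappa$.

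For sufficiency I would exhibit $K$ explicitly. Fix an orthonormal eigenbasis $v_1,\dots,v_m$ of $D$ with $Dv_i=\mu_i v_i$ and orthonormal eigenvectors $u_1,\dots,u_k$ of $A$ for its positive eigenvalues, $Au_i=\lambda_i u_i$. Set $K=\sum_i c_i u_i v_i^*$, summing over $i\le r$ in (i) and over $i\le r-p$ in (ii); this is possible precisely because $r\le k$, respectively $r-p\le k$. A direct computation with the orthonormality relations gives $K^*AK=\sum_i |c_i|^2\lambda_i\, v_i v_i^*$, so $D+K^*AK=\sum_i(\mu_i+|c_i|^2\lambda_i)v_iv_i^*$ is diagonal in the basis $\{v_i\}$, and $\|K\|=\max_i|c_i|$ because $K$ is diagonal in these orthonormal bases. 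It then remains to choose the scalars: in (i) take $|c_i|^2\in(-\mu_i/\lambda_i,\kappa^2)$ for $i=1,\dots,r$, a nonempty interval exactly when $\kappa^2\lambda_i+\mu_i>0$ (which holds for $i\le r-p$ by hypothesis and for $r-p<i\le r$ since then $\mu_i=0$); in (ii) take $|c_i|^2=-\mu_i/\lambda_i\le\kappa^2$ for $i\le r-p$. Every diagonal entry is then positive (resp.\ nonnegative) and the norm is controlled.

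The step I expect to be most delicate is the necessity of the eigenvalue inequalities, where Weyl's bound must be matched against Proposition~\ref{HierIstEsIrreWarm} under its index restriction $j\le\min\{k,m,\rank(K^*AK)\}$; the case distinction on the sign of $\lambda_{i'}^\downarrow(K^*AK)$ is precisely what resolves this. By contrast the sufficiency is routine once one lands on the ansatz $K=\sum_i c_i u_i v_i^*$, which simultaneously diagonalizes $K^*AK$ in the eigenbasis of $D$ and keeps the norm equal to $\max_i|c_i|$.
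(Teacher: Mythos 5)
Your proof is correct and follows essentially the same route as the paper: necessity via Lemma~\ref{lem: necessary cond} together with Weyl's inequalities and Proposition~\ref{HierIstEsIrreWarm}, and sufficiency via an explicit diagonal construction --- your ansatz $K=\sum_i c_i u_i v_i^*$ is exactly the paper's matrix $K=UEV^*$ from~\eqref{eq:defK2} written in outer-product notation, with $c_i=\sqrt{\veps_i}$. Your sign case distinction on $\lambda_{i'}^\downarrow(K^*AK)$ is in fact slightly more careful than the paper, which invokes Proposition~\ref{HierIstEsIrreWarm} without explicitly checking the index restriction $j\le\rank(K^*AK)$; your observation that $\lambda_{i'}^\downarrow(K^*AK)>0$ forces $i'\le\rank(K^*AK)$ (and that the bound is trivial otherwise) closes that small gap.
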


\begin{proof}
We show (i). Assume that there exists a matrix $K\in \dC^{n\times m}$ with $\|K\|< \kappa$ such that $D+K^*AK>0$. By Lemma~\ref{lem: necessary cond}, it is necessary that $r\leq k$. On the other hand, by Theorem~\ref{Thm:Weyl},
\[
0<\la_m^\downarrow(D+K^*AK) \leq \la_i^\downarrow(D) + \la_{m-i+1}^\downarrow(K^*AK),
\]
for $i=1,\ldots,m$. In particular, for $i=m-r+p+1,\ldots,m$ we can combine the above inequalities with Proposition \ref{HierIstEsIrreWarm} and obtain
\begin{align*}
0 <  \la_i^\downarrow(D) + \|K\|^2\la_{m-i+1}^\downarrow(A) < \mu_{m-i+1} + \kappa^2 \la_{m-i+1}.
\end{align*}
Equivalently, we have that $\mu_j + \kappa^2 \la_j>0$ for $j=1,\ldots,r-p$.

Conversely, assume that $r\leq k$ and $\kappa^2 \la_i + \mu_i>0$ for $i=1,\ldots,r-p$.
For each $i=1,\ldots,r-p$ let $0<\veps_i<\kappa^2$ be such that $\veps_i \la_i + \mu_i>0$, and for $j=r-p+1,\ldots,r$ let $0<\veps_j< \kappa^2$ be arbitrary. Then, define $E\in \dC^{n\times m}$ by
\begin{equation*}\label{Soistesschoen2}
E=\matriz{\diag(\sqrt{\veps_1},\ldots,\sqrt{\veps_r})}{0_{r, m-r}}{0_{n-r,r}}{0_{n-r,m-r}},
\end{equation*}
where $0_{p,q}$ is the null matrix in $\dC^{p\times q}$. Further, let $U\in \dC^{n\times n}$ and $V\in \dC^{m\times m}$ be unitary matrices such that $A=UD_\la U^*$ and $D=VD_\mu V^*$, where
\[
D_\la=\diag(\la_1,\ldots,\la_n) \quad \text{and} \quad
D_\mu=\diag(\mu_1,\ldots,\mu_m).
\]
Then, for
\begin{equation}\label{eq:defK2}
  K:=UEV^*,
\end{equation}
it is straightforward to observe that $\|K\|< \kappa$ and
\begin{align*}
D+K^*AK &= V(D_\mu + E^*U^*AUE)V^*= V(D_\mu + E^* D_\la E)V^* \\
 &= V\matriz{\diag(\veps_1 \la_1 +\mu_1,\ldots, \veps_r \la_r +\mu_r)}{0_{r,m-r}}{0_{m-r,r}}{\diag(\mu_{r+1},\ldots,\mu_m)}V^*
\end{align*}
is a positive definite matrix because $\veps_i$ was chosen in such a way that $\veps_i \la_i + \mu_i>0$ for $i=1,\ldots,r-p$, and $\veps_j\la_j + \mu_j=\veps_j\la_j>0$ for $j=r-p+1,\ldots,r$.

The proof of~(ii) is analogous. If there is a matrix $K\in \dC^{n\times m}$ with $\|K\|\leq \kappa$ such that $D+K^*AK$ is positive semidefinite, then $r-p\leq k$ (see Lemma \ref{lem: necessary cond}) and following the same arguments as before it is easy to see that $\kappa^2 \lambda_i + \mu_i\geq 0$ for $i=1,\ldots,r-p$. The converse can also be proved in a similar way, but in this case $\veps_i$ may be equal to $\kappa^2$ for some $i=1,\ldots,r-p$ (and $\veps_j$ can also be zero for $j=r-p+1,\ldots,r$). Therefore, $\|K\|\leq\kappa$ and $D+K^*AK$ is positive semidefinite.
\end{proof}

\section{Spectrum of the block matrix}\label{Spec}

In the following, we consider the matrix $K$ constructed in
 the proof of Theorem~\ref{Thm:exist-contr2} and investigate the location of the eigenvalues of
$S$ in~\eqref{eq:blockS}.
The locations depend on the parameters $\veps_1,\ldots,\veps_r$ and hence their study resembles a root locus analysis.
We start with a preliminary lemma.

\begin{lem}\label{Lem:alpha2}
Let Assumption~\ref{assumption2} and~\eqref{EsoEs2}
hold and set
\begin{equation}\label{alpha2}
\alpha_i := \tfrac{(\la_i-\mu_i)^2}{4\la_i^{2}},\quad i=1,\ldots,r-p.
\end{equation}
Then we have that
\[
    0< \tfrac{-\mu_i}{\la_i} \leq \alpha_i \leq \left(\tfrac{\kappa^2 +1}{2}\right)^2,\quad \text{for all } i=1,\ldots,r-p.
\]
\end{lem}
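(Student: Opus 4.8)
The plan is to pin down the signs of $\la_i$ and $\mu_i$ on the index range $i=1,\ldots,r-p$ first, and then verify the three inequalities separately, each by a short piece of elementary algebra. The first step is to record that for $i=1,\ldots,r-p$ we have $i\le r-p\le k$ by the first condition in~\eqref{EsoEs2}, so that $\la_i>0$. Moreover, since $\dim(\ker D)=p$ and the zero eigenvalues of $D$ are the largest among the nonpositive ones in the ordering $\mu_1\le\ldots\le\mu_r\le 0$, they occupy the positions $r-p+1,\ldots,r$; consequently $\mu_i<0$ \emph{strictly} for $i=1,\ldots,r-p$. These two observations immediately give the leftmost inequality $0<-\mu_i/\la_i$, and they also yield $\la_i-\mu_i>0$, a fact I will use when taking square roots below.

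For the middle inequality $-\mu_i/\la_i\le\alpha_i$, the idea is simply to clear denominators. Multiplying through by $4\la_i^2>0$ transforms it into $-4\la_i\mu_i\le(\la_i-\mu_i)^2$, and expanding the right-hand side and rearranging turns this into $0\le(\la_i+\mu_i)^2$, which holds for trivial reasons (with equality precisely when $\la_i+\mu_i=0$). So the middle inequality is in fact valid unconditionally.

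For the rightmost inequality, the point is that it is essentially a restatement of the hypothesis~\eqref{EsoEs2}. Since both sides of $\alpha_i\le\bigl((\kappa^2+1)/2\bigr)^2$ are nonnegative, I would take square roots, using $\la_i>0$ and $\la_i-\mu_i>0$ to write $\sqrt{\alpha_i}=\tfrac{\la_i-\mu_i}{2\la_i}$; the inequality then becomes $\tfrac{\la_i-\mu_i}{2\la_i}\le\tfrac{\kappa^2+1}{2}$, which after clearing the factor $2\la_i$ reduces to $-\mu_i\le\kappa^2\la_i$, i.e.\ $\kappa^2\la_i+\mu_i\ge 0$. This is exactly the second condition in~\eqref{EsoEs2}, so the inequality follows at once.

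I do not expect a genuine obstacle here: each of the three estimates collapses to something elementary (a sign, the square $(\la_i+\mu_i)^2\ge 0$, and the hypothesis itself). The only point requiring a little care is the justification of the \emph{strict} sign $\mu_i<0$ rather than merely $\mu_i\le 0$, since this is what the strict leftmost inequality $0<-\mu_i/\la_i$ rests on; this in turn is where one must correctly locate $\ker D$ within the eigenvalue ordering of $D$.
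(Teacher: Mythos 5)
Your proof is correct and takes essentially the same route as the paper's: the middle inequality collapses to $(\la_i+\mu_i)^2\ge 0$ and the rightmost one to the hypothesis $\kappa^2\la_i+\mu_i\ge 0$, with the only cosmetic difference that you take square roots where the paper squares the bound $\la_i-\mu_i\le(\kappa^2+1)\la_i$. You also spell out the point the paper leaves implicit, namely that $\mu_i<0$ strictly for $i\le r-p$ because the $p$ zero eigenvalues of $D$ occupy positions $r-p+1,\ldots,r$, which is exactly what the strict inequality $0<-\mu_i/\la_i$ requires.
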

\begin{proof} Given $i=1,\ldots,r-p$ it is straightforward that $(\lambda_i-\mu_i)^2\geq-4\mu_i\lambda_i$. If~\eqref{EsoEs2} holds, then $\la_i>0$ for all $i=1,\ldots,r-p$
and hence $\alpha_i \geq -\tfrac{\mu_i}{\lambda_i} > 0$. Furthermore,
\[
    \la_i - \mu_i = (\kappa^2 +1)\la_i-(\kappa^2\la_i+\mu_i) \leq (\kappa^2 +1)\la_i,
\]
which implies that $\alpha_i\leq \big(\tfrac{\kappa^2 +1}{2}\big)^2$.
\end{proof}

In case that Assumption~\ref{assumption2} and~\eqref{EsoEs2} hold, we
describe the spectrum of the block matrix~$S$ given in~\eqref{eq:blockS} for the matrix~$K$ defined in~\eqref{eq:defK2}.

\begin{thm}\label{Thm:SpecS}
Let Assumption~\ref{assumption2} hold. Given $\kappa>0$, assume that~\eqref{EsoEs2} also holds. For $i=1,\ldots,r-p$ choose $0<\veps_i\leq\kappa^2$ such that $\veps_i \la_i + \mu_i \geq 0$, and for $j=r-p+1,\ldots, r$ set $\veps_j=0$.

If $K\in \dC^{n\times m}$ is as defined in~\eqref{eq:defK2}, then $\|K\|\leq\kappa$ and the spectrum of the block matrix~$S\in \dC^{(n+m)\times(n+m)}$ given in~\eqref{eq:blockS}
consists of the real numbers $\la_{r-p+1},\ldots,\la_n$, $\mu_{r-p+1},\ldots,\mu_m$, and
\begin{equation}\label{ETFLuz}
\eta_i^\pm=\tfrac{\la_i + \mu_i}{2} \pm\lambda_i \sqrt{\alpha_i -\veps_i}, \quad  i=1,\ldots, r-p,
\end{equation}
where $\alpha_i$ is given by~\eqref{alpha2}.
Moreover, for  $i\in\{1,\ldots, r-p\}$, we have
\begin{enumerate}
\item[a)] if $0<\veps_i<\tfrac{-\mu_i}{\la_i}$, then $\la_i>\eta_i^+ > 0 > \eta_i^- > \mu_i$;
 \item[b)] if $\tfrac{-\mu_i}{\la_i}\leq\veps_i<\alpha_i$, then $\max\{\la_i+\mu_i,0\} \ge \eta_i^+ > \eta_i^- \geq \min\big\{\la_i + \mu_i, 0\big\}$;
  \item[c)] if $\alpha_i < \veps_i \leq\kappa^2$, then $\eta_i^+ = \overline{\eta_i^-} \in \C\setminus\R$;
   \item[d)] if $\veps_i=\alpha_i$, then $\eta_i^+ = \eta_i^-=\tfrac12 (\la_i + \mu_i)$ and there exists a Jordan chain of length~$2$ corresponding to this eigenvalue.
\end{enumerate}
Additionally, if $\veps_i\neq \alpha_i$ for all $i=1,\ldots, r-p$, then $S$ is diagonalizable.
\end{thm}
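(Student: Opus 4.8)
The plan is to remove the off-diagonal coupling by a unitary change of basis and thereby reduce the claim to a family of decoupled $2\times2$ problems. Writing $A = U D_\la U^*$ and $D = V D_\mu V^*$ as in the construction of $K = UEV^*$, I would conjugate $S$ by the block-unitary $W = \matriz{U}{0}{0}{V}$. A direct computation using $U^*U = I_n$ and $V^*V = I_m$ gives
\[
W^* S W = \matriz{D_\la}{-D_\la E}{E^* D_\la}{D_\mu},
\]
and since unitary similarity preserves the Jordan structure (in particular diagonalizability), it suffices to prove that this transformed matrix is diagonalizable.

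The key observation is that the only nonzero entries of $E$ are $E_{ii} = \sqrt{\veps_i}$ for $i=1,\ldots,r$, so $W^*SW$ couples the $i$-th coordinate of the first block only to the $(n+i)$-th coordinate of the second block. Applying the permutation that interlaces these coordinate pairs turns $W^*SW$ into a block-diagonal matrix whose blocks are the $2\times2$ matrices
\[
M_i = \matriz{\la_i}{-\la_i\sqrt{\veps_i}}{\la_i\sqrt{\veps_i}}{\mu_i}, \qquad i=1,\ldots,r,
\]
together with the scalar blocks $\la_{r+1},\ldots,\la_n$ and $\mu_{r+1},\ldots,\mu_m$. Since a block-diagonal matrix is diagonalizable if and only if each of its diagonal blocks is (its minimal polynomial is the least common multiple of the minimal polynomials of the blocks, and squarefreeness is preserved under taking least common multiples), the problem reduces to checking each $M_i$ separately.

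For $i = r-p+1,\ldots,r$ one has $\veps_i = 0$ and $\mu_i = 0$, so $M_i = \diag(\la_i,0)$ is already diagonal. For $i = 1,\ldots,r-p$, the characteristic polynomial of $M_i$ is $\eta^2 - (\la_i+\mu_i)\eta + (\la_i\mu_i + \la_i^2\veps_i)$, whose discriminant equals $4\la_i^2(\alpha_i - \veps_i)$ with $\alpha_i$ as in~\eqref{alpha2}. Under the hypothesis $\veps_i \neq \alpha_i$ this discriminant is nonzero, so $M_i$ has the two distinct eigenvalues $\eta_i^\pm$ and is therefore diagonalizable. Combining all blocks shows that $W^*SW$, and hence $S$, is diagonalizable.

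I expect the main obstacle to be the bookkeeping behind the decoupling rather than any deep argument: one must verify carefully that the diagonal structure of $E$ genuinely reduces $W^*SW$ to the direct sum of the $M_i$ and the scalar blocks after the interlacing permutation. Once this is in place the conclusion is immediate, since for each $i\leq r-p$ the inequality $\veps_i \neq \alpha_i$ is precisely the statement that $\eta_i^+ \neq \eta_i^-$, excluding the repeated-eigenvalue situation of case~(d). I would also stress that diagonalizability of the full matrix does not require the eigenvalues of distinct blocks to differ from one another, only that every individual block be diagonalizable, so no assumption on the $\veps_i$ beyond $\veps_i \neq \alpha_i$ is needed.
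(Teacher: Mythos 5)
Your reduction is exactly the one the paper uses: conjugation by the block-unitary $W=\smallmat{U}{0}{0}{V}$, followed by the interlacing permutation that splits $W^*SW$ into scalar blocks and the $2\times 2$ blocks $M_i$, and then reading off~\eqref{ETFLuz} from the quadratic characteristic polynomial with discriminant $4\la_i^2(\alpha_i-\veps_i)$. Your concluding step for diagonalizability --- each $M_i$ with $\veps_i\neq\alpha_i$ has two distinct eigenvalues, and a block-diagonal matrix is diagonalizable precisely when all of its blocks are (lcm of minimal polynomials) --- is a mild and correct variant of the paper's argument, which instead exhibits the explicit eigenvectors $\smallvek{f_i}{-\la_i\sqrt{\veps_i}(\mu_i-\eta_i^{\pm})^{-1}f_i}$ of the decoupled matrix; your route even avoids having to note that $\mu_i\neq\eta_i^{\pm}$ (needed for the paper's denominators, and true here because $\veps_i>0$). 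Everything you wrote is sound, including the treatment of the indices $r-p+1,\ldots,r$, where $\veps_j=\mu_j=0$ makes $M_j$ already diagonal.

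The gap is one of coverage: the theorem asserts more than the eigenvalue list and the diagonalizability criterion, and the proposal leaves those parts unproved. First, the bound $\|K\|\leq\kappa$ is part of the statement; it is immediate from $\|K\|=\|E\|=\max_i\sqrt{\veps_i}\leq\kappa$, but it should be recorded. Second, the localization statements a) and b) do not follow from anything you wrote: they require comparing $\sqrt{\alpha_i-\veps_i}$ with $\tfrac{|\la_i+\mu_i|}{2\la_i}$ (one has $\veps_i<\tfrac{-\mu_i}{\la_i}$ exactly when $\sqrt{\alpha_i-\veps_i}>\tfrac{|\la_i+\mu_i|}{2\la_i}$) and with $\sqrt{\alpha_i}=\tfrac{\la_i-\mu_i}{2\la_i}$ to get $\eta_i^+<\la_i$ and $\eta_i^->\mu_i$; these computations occupy roughly half of the paper's proof. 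Third, statement d) claims the \emph{existence} of a Jordan chain of length~$2$ when $\veps_i=\alpha_i$; your remark that $\veps_i\neq\alpha_i$ ``excludes'' case d) uses that case, it does not prove it. In your framework the missing argument is one line: at $\veps_i=\alpha_i$ the block $M_i$ has the double eigenvalue $\tfrac12(\la_i+\mu_i)$ but is not a scalar matrix, since its off-diagonal entries $\pm\la_i\sqrt{\alpha_i}=\pm\tfrac{\la_i-\mu_i}{2}$ are nonzero, so it carries a single Jordan block of size~$2$ (the paper instead writes the chain explicitly). None of these gaps requires a new idea --- your $2\times 2$ reduction delivers all of them --- but they must be carried out for the proof to be complete.
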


\begin{proof}
First note that by Lemma~\ref{Lem:alpha2} the range for~$\veps_i$ in case~a) is non-empty independently of $\kappa$, but the same may not be true for cases~b) and c). We will discuss this later in Remark \ref{ranges}.

Using the notation from the proof of Theorem~\ref{Thm:exist-contr2} we obtain
\begin{align*}
S& =\matriz{A}{-AK}{K^*A}{D}=\matriz{UD_\la U^*}{-UD_\la EV^*}{VE^*D_\la U^*}{VD_\mu V^*}=\\
&= \matriz{U}{0}{0}{V}\matriz{D_\la}{-B}{B^*}{D_\mu}\matriz{U}{0}{0}{V}^*= W\matriz{D_\la}{-B}{B^*}{D_\mu}W^*,
\end{align*}
where $B\in \dC^{n\times m}$ is given by
\[
B:=D_\la E=\matriz{\diag(\la_1 \sqrt{\veps_1},\ldots,\la_{r-p} \sqrt{\veps_{r-p}})}{0_{r-p,m-r+p}}{0_{n-r+p,r-p}}{0_{n-r+p,m-r+p}},
\]
and $W:=\begin{smallbmatrix} U&0\\ 0&V\end{smallbmatrix}\in \dC^{(n+m)\times (n+m)}$ is unitary. Then, if $\{e_1,\ldots,e_{n+m}\}$ denotes the standard basis of $\dC^{n+m}$, it is easy to see that
\begin{equation}\label{eq:eigenvectors}
\begin{aligned}
    S\,We_i &= \la_i\, We_i && \text{for}\  i=r-p+1,\ldots, n,\\
    \text{and}\quad S\, We_j &= \mu_{j-n}\, We_j &&\text{for}\ j=n+r-p+1,\ldots, n+m,
\end{aligned}
\end{equation}
which yields that $\la_{r-p+1},\ldots,\la_n$ and $\mu_{r-p+1},\ldots,\mu_m$ are eigenvalues of~$S$.

Now, define the following $(r-p)\times (r-p)$ diagonal matrices:
\begin{align*}
  F_\la &:=\diag(\la_1,\ldots,\la_{r-p}),\quad &  F_\mu &:=\diag(\mu_1,\ldots,\mu_{r-p}),\\
  G &:= \diag(\la_1 \sqrt{\veps_1},\ldots,\la_{r-p} \sqrt{\veps_{r-p}}),
\end{align*}
and observe that the remaining~$2(r-p)$ eigenvalues of~$S$ coincide with the spectrum of the submatrix~$\tilde S$ of $W^*SW$ given by
\[
\tilde{S}:=\matriz{F_\la}{-G}{G}{F_\mu}.
\]
In order to calculate the eigenvalues of~$\tilde{S}$, consider the matrix $P_\sigma\in \dC^{2(r-p)\times 2(r-p)}$ associated to the following permutation of the integers $\{1,2,\ldots,2(r-p)\}$:
\[
\sigma(j)=
\begin{cases}
2j-1, & \text{for}\  j=1,\ldots, r-p, \\
2(j-r+p), & \text{for}\  j=r-p+1,\ldots, 2(r-p).
\end{cases}
\]
Then, we have that $P_\sigma^2=I_{2(r-p)}$ and $P_\sigma \tilde{S} P_\sigma$
is a block diagonal matrix, with $r-p$ blocks of size $2\times 2$ along the main diagonal:
\[
\matriz{\la_j}{-\la_j \sqrt{\veps_j}}{\la_j \sqrt{\veps_j}}{\mu_j}, \quad j=1,\ldots,r-p.
\]
Thus, the characteristic polynomial of~$\tilde{S}$ is given by
\begin{align*}
  q(\eta) = \prod_{i=1}^{r-p} \left( (\mu_i-\eta) (\la_i -\eta) +  \veps_i \la_i^2\right),
\end{align*}
and $\eta\in\C$ is a root of $q(\eta)$ if and only if
\begin{equation*}
\eta^2 -(\la_i + \mu_i)\eta + \la_i(\mu_i + \veps_i\la_i)=0
\end{equation*}
for some $i\in\{1,\ldots,r-p\}$. This leads to the following eigenvalues of $\tilde{S}$:
\begin{equation}\label{Croatia}
    \eta_i^\pm=\tfrac{\la_i + \mu_i}{2} \pm \tfrac{1}{2}\sqrt{(\la_i -\mu_i)^2 - 4\veps_i\la_i^2}=\tfrac{\la_i + \mu_i}{2} \pm \la_i\sqrt{\alpha_i-\veps_i}
\end{equation}
for $i=1,\ldots, r-p$. Hence,~\eqref{ETFLuz} follows and statement~c) holds.

For statement~a) we observe that if $0<\veps_i<\tfrac{-\mu_i}{\la_i}$, then $\sqrt{\alpha_i-\veps_i}>\tfrac{|\la_i+\mu_i|}{2\la_i}$. Therefore,
\[
\eta_i^+>\tfrac{\la_i+\mu_i}{2} + \la_i \tfrac{|\la_i+\mu_i|}{2\la_i}\geq 0 \quad \text{and} \quad \eta_i^-<\tfrac{\la_i+\mu_i}{2} - \la_i \tfrac{|\la_i+\mu_i|}{2\la_i}\leq 0.
\]
Furthermore,
\begin{align*}
\eta_i^+ &< \tfrac{\la_i+\mu_i}{2} + \la_i\sqrt{\alpha_i} = \tfrac{\la_i+\mu_i}{2} + \la_i \tfrac{\la_i-\mu_i}{2\la_i}=\la_i,\\
  \eta_i^-&>\tfrac{\la_i+\mu_i}{2} - \la_i\sqrt{\alpha_i}=\tfrac{\la_i+\mu_i}{2}- \la_i \tfrac{\la_i-\mu_i}{2\la_i}=\mu_i.
\end{align*}
On the other hand, if $\tfrac{-\mu_i}{\la_i}\leq\veps_i<\alpha_i$, then $\sqrt{\alpha_i-\veps_i}\leq \tfrac{|\la_i+\mu_i|}{2\la_i}$ and
\begin{align*}
\eta_i^- &\geq \tfrac{\la_i + \mu_i}{2} - \la_i\tfrac{|\la_i+\mu_i|}{2\la_i}=\min\left\{\la_i + \mu_i , 0\right\},\\
\eta_i^+ &\le \tfrac{\la_i + \mu_i}{2} + \la_i\tfrac{|\la_i+\mu_i|}{2\la_i}=\max\left\{\la_i + \mu_i , 0\right\},
\end{align*}
and, clearly, $\eta_i^+ > \tfrac{\la_i+\mu_i}{2} > \eta_i^-$, which proves~b).

To show~d), assume that $\veps_i = \alpha_i$ for some $i\in\{1,\ldots,r-p\}$. Since $\eta_i^+ = \eta_i^- = \tfrac12 (\la_i+\mu_i)$ and $\sqrt{\veps_i} = \tfrac{\la_i-\mu_i}{2\la_i}$, it is straightforward to compute
\begin{align*}
   \left( \tilde S - \tfrac12 (\la_i+\mu_i) I_{2(r-p)}\right)
    \begin{pmatrix} \left(1 + \tfrac{2}{\la_i - \mu_i}\right) f_i\\ f_i\end{pmatrix} &= \begin{pmatrix} f_i\\ f_i\end{pmatrix},\\
     \left( \tilde S - \tfrac12 (\la_i+\mu_i) I_{2r}\right) \begin{pmatrix} f_i\\ f_i\end{pmatrix} &= 0,
\end{align*}
using the standard basis $\{f_1,\ldots, f_{r-p}\}$ of $\C^{r-p}$. The vectors above form a Jordan chain of length $2$ of $\tilde{S}$ corresponding to the eigenvalue $\tfrac12 (\la_i+\mu_i)$. Hence, a Jordan chain of $S$ corresponding to the eigenvalue $\tfrac12 (\la_i+\mu_i)$ can also be constructed.

Finally, assume that $\veps_i\neq \alpha_i$ for all $i=1,\ldots, r-p$. In this case, the space $\C^{n+m}$ has a basis consisting of eigenvectors of~$S$. Indeed, this follows from~\eqref{eq:eigenvectors} together with
\[
    \left(\tilde S - \eta_i^+ I_{2(r-p)}\right) \begin{pmatrix} f_i\\  -\tfrac{\la_i \sqrt{\veps_i}}{\mu_i - \eta_i^+} f_i\end{pmatrix} = 0,\quad \left(\tilde S - \eta_i^- I_{2(r-p)}\right) \begin{pmatrix} f_i\\  -\tfrac{\la_i \sqrt{\veps_i}}{\mu_i - \eta_i^-} f_i\end{pmatrix} = 0
\]
for $i=1,\ldots,r-p$.
\end{proof}

We emphasize that if for all $i = 1,\ldots, r-p$ the parameter $\veps_i$ in Theorem~\ref{Thm:SpecS} is
chosen such that a) or b) holds, then the block matrix S in \eqref{eq:blockS} is diagonalizable
and has only real eigenvalues, cf.\ Lemma~\ref{Lem:alpha2}.

\begin{rem}\label{ranges}
Given $\kappa>0$, note that $\big(\tfrac{\kappa^2+1}{2}\big)^2\geq \kappa^2$  and equality holds if and only if $\kappa=1$. Hence, if $\kappa\neq 1$ and $\kappa^2<\alpha_i\leq \big(\tfrac{\kappa^2+1}{2}\big)^2$ for some $i\in\{1,\ldots,r-p\}$, then the corresponding eigenvalues $\eta_i^+$ and $\eta_i^-$ are real, because the range of values for $\veps_i$ in case~c) is empty.

For $\kappa=1$, if there exists $i\in\{1,\ldots,r-p\}$ such that $\la_i+\mu_i>0$, then
\[
    \la_i - \mu_i = -(\la_i+\mu_i) + 2\la_i < 2\la_i,
\]
hence $\alpha_i<1$ and we can choose the corresponding parameter $\veps_i$ such that~$S$ has non-real eigenvalues.

Furthermore, if~$A$ is positive semidefinite, $\kappa\leq 1$ and $\veps_i\geq \tfrac{-\mu_i}{\la_i}$ for each $i=1,\ldots,r-p$, then $\lambda_i + \mu_i\ge 0$ and hence the eigenvalues of~$S$ are contained in the (closed) complex right half-plane.
\end{rem}

In the remainder of this section, we calculate the eigenvalues of the block matrix $S$ under the assumption that its Schur complement is positive definite. Note that if Assumption~\ref{assumption2} and~\eqref{EsoEs3} hold we may define~$\alpha_i$ as in~\eqref{alpha2} for all $i=1,\ldots,r$. In this case, $0< \tfrac{-\mu_i}{\la_i} \leq \alpha_i < \big(\tfrac{\kappa^2 +1}{2}\big)^2$ for $i=1,\ldots,r-p$, and $\alpha_j=\tfrac{1}{4}$ for $j=r-p+1,\ldots, r$.

\begin{thm}\label{Thm:SpecS2}
Let Assumption~\ref{assumption2} hold. Given $\kappa>0$, assume that~\eqref{EsoEs3} also holds. For $i=1,\ldots,r-p$ choose $0<\veps_i<\kappa^2$ such that $\veps_i \la_i + \mu_i >0$, and for $j=r-p+1,\ldots, r$ let $0\leq \veps_j<\kappa^2$ be arbitrary.

If $K\in \dC^{n\times m}$ is as defined in~\eqref{eq:defK2}, then $\|K\|<\kappa$ and the spectrum of the block matrix~$S\in \dC^{(n+m)\times(n+m)}$ given in~\eqref{eq:blockS}
consists of the real numbers $\la_{r+1},\ldots,\la_n$, $\mu_{r+1},\ldots,\mu_m$, and
\begin{equation}\label{ETFLuz2}
\eta_i^\pm=\tfrac{\la_i + \mu_i}{2} \pm\lambda_i \sqrt{\alpha_i -\veps_i}, \quad  i=1,\ldots, r,
\end{equation}
where $\alpha_i$ is given by~\eqref{alpha2}.
Moreover, for $i=1,\ldots, r$, we have
\begin{enumerate}
\item[a)] if $0<\veps_i<\tfrac{-\mu_i}{\la_i}$, then $\la_i>\eta_i^+ > 0 > \eta_i^- > \mu_i$;
 \item[b)] if $\tfrac{-\mu_i}{\la_i}\leq\veps_i<\alpha_i$, then $\max\{\la_i+\mu_i,0\} \ge \eta_i^+ > \eta_i^- \geq \min\big\{\la_i + \mu_i, 0\big\}$;
  \item[c)] if $\alpha_i < \veps_i <\kappa^2$, then $\eta_i^+ = \overline{\eta_i^-} \in \C\setminus\R$;
   \item[d)] if $\veps_i=\alpha_i$, then $\eta_i^+ = \eta_i^-=\tfrac12 (\la_i + \mu_i)$ and there exists a Jordan chain of length~$2$ corresponding to this eigenvalue;
		\item[e)] if $i>r-p$ and $\veps_i=0$, then $\eta_i^+= \la_i >0$ and $\eta_i^-=\mu_i=0$.
\end{enumerate}
Additionally, if $\veps_i\neq \alpha_i$ for all $i=1,\ldots, r$, then $S$ is diagonalizable.
\end{thm}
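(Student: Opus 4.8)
The plan is to follow the proof of Theorem~\ref{Thm:SpecS} almost verbatim, the only structural changes being that assumption~\eqref{EsoEs3} now forces $\mu_i>0$ strictly precisely for $i>r$, that the parameters $\veps_j$ with $j=r-p+1,\ldots,r$ are allowed to range over $[0,\kappa^2)$ rather than being fixed to $0$, and that all the relevant diagonal blocks grow from size $r-p$ to size $r$. First I would record that $\|K\|<\kappa$: since $K=UEV^*$ with $U,V$ unitary, one has $\|K\|=\|E\|=\max_i\sqrt{\veps_i}<\kappa$, the inequality being strict here because every $\veps_i<\kappa^2$ strictly (in contrast to the positive semidefinite case, where $\veps_i=\kappa^2$ was permitted).

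Next I would repeat the unitary reduction. Writing $A=UD_\la U^*$, $D=VD_\mu V^*$ and $W=\begin{smallbmatrix}U&0\\0&V\end{smallbmatrix}$, one obtains $W^*SW=\begin{smallbmatrix}D_\la&-B\\B^*&D_\mu\end{smallbmatrix}$ with $B=D_\la E$, exactly as before. The analogue of~\eqref{eq:eigenvectors} then yields the eigenvalues $\la_{r+1},\ldots,\la_n$ and $\mu_{r+1},\ldots,\mu_m$, the index ranges starting at $r+1$ because under~\eqref{EsoEs3} the entire leading $r\times r$ corner participates in the nontrivial part. The remaining $2r$ eigenvalues are those of the submatrix $\tilde S=\begin{smallbmatrix}F_\la&-G\\G&F_\mu\end{smallbmatrix}$, where now $F_\la=\diag(\la_1,\ldots,\la_r)$, $F_\mu=\diag(\mu_1,\ldots,\mu_r)$ and $G=\diag(\la_1\sqrt{\veps_1},\ldots,\la_r\sqrt{\veps_r})$ are $r\times r$. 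Applying the same permutation $P_\sigma$ decouples $\tilde S$ into the $r$ blocks $\begin{smallbmatrix}\la_j&-\la_j\sqrt{\veps_j}\\\la_j\sqrt{\veps_j}&\mu_j\end{smallbmatrix}$, $j=1,\ldots,r$, whose characteristic polynomials multiply to give~\eqref{ETFLuz2} for all $i=1,\ldots,r$.

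Then I would transcribe the case analysis a)--d) without change: those arguments use only $\la_i>0$ together with the definition~\eqref{alpha2} of $\alpha_i$, both valid over the full range $i=1,\ldots,r$, and the strict inequalities simply replace the non-strict ones. The one genuinely new item is case~e): for $j>r-p$ we have $\mu_j=0$, hence $\alpha_j=\la_j^2/(4\la_j^2)=\tfrac14$, so $\veps_j=0$ gives $\eta_j^\pm=\tfrac{\la_j}{2}\pm\la_j\cdot\tfrac12$, that is $\eta_j^+=\la_j>0$ and $\eta_j^-=0=\mu_j$. Finally, diagonalizability when $\veps_i\neq\alpha_i$ for all $i$ follows, as in Theorem~\ref{Thm:SpecS}, from writing down the explicit eigenvectors of $\tilde S$ belonging to $\eta_i^\pm$ together with the eigenvectors already produced by the analogue of~\eqref{eq:eigenvectors}.

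I do not expect a substantive obstacle; the proof is a routine adaptation. The only point demanding care is the index bookkeeping: one must check that extending every block from dimension $r-p$ to dimension $r$ is internally consistent, and in particular that the indices $j=r-p+1,\ldots,r$ with $\mu_j=0$ are absorbed into the $2\times2$ block analysis (producing the eigenvalue pair $\la_j,0$ via case~e)) rather than being double-counted among the real eigenvalues listed separately.
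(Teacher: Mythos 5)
Your proposal is correct and takes essentially the same approach as the paper, whose own proof of Theorem~\ref{Thm:SpecS2} consists precisely of the remark that the argument of Theorem~\ref{Thm:SpecS} carries over with all blocks enlarged from size $r-p$ to size $r$ (so that the separately listed real eigenvalues start at index $r+1$) and that the new case e) is straightforward. One cosmetic caveat: for $i>r-p$ with $\veps_i=0$ the explicit eigenvector formula $\bigl(f_i,\,-\tfrac{\la_i\sqrt{\veps_i}}{\mu_i-\eta_i^-}f_i\bigr)$ borrowed from Theorem~\ref{Thm:SpecS} becomes indeterminate (numerator and $\mu_i-\eta_i^-$ both vanish, since $\eta_i^-=\mu_i=0$), but there the corresponding $2\times 2$ block is $\smallmat{\la_i}{0}{0}{0}$, already diagonal, so your diagonalizability claim still holds with the obvious eigenvector $(0,f_i)$.
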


\begin{proof}
The proof is analogous to the proof of Theorem \ref{Thm:SpecS}, the main difference is that in this case $S =W\begin{smallbmatrix} D_\la& -B\\ B^* &D_\mu\end{smallbmatrix}W^*$,
where
\[
B:=D_\la E=\matriz{\diag(\la_1 \sqrt{\veps_1},\ldots,\la_r \sqrt{\veps_r})}{0_{r,m-r}}{0_{n-r,r}}{0_{n-r,m-r}} \in \dC^{n\times m},
\]
which yields that $\la_{r+1},\ldots,\la_n$ and $\mu_{r+1},\ldots,\mu_m$ are eigenvalues of~$S$. The remaining $2r$ eigenvalues of $S$ can be calculated in the same way as before. Also, the only difference in the characterization of the eigenvalues $\eta_i^\pm$ appears in the case in which $i=r-p+1,\ldots,r$ and $\veps_i=0$. But the proof of this last case is straightforward.
\end{proof}

\begin{ex} We illustrate Theorem~\ref{Thm:SpecS2} with a simple example. Let $n=m=1$, $D=[0]$ and $A = [a]$ with $a>0$. Then $r=1$ and choosing~$K$ as in~\eqref{eq:defK2} with $0<\veps<1 = \kappa^2$ gives $K=[\sqrt{\veps}]$. In this case $\alpha = \tfrac{1}{4}$.

By Theorem~\ref{Thm:SpecS2}, for $\veps = \tfrac{1}{4}$ there is a Jordan chain of length~2 corresponding to the only eigenvalue $\tfrac{a}{2}$, and indeed we find that $\smallvek{\tfrac{1}{a}}{\tfrac{-1}{a}}, \vek{1}{1}$ form a Jordan chain of $S$, hence $S$ is not diagonalizable.

On the other hand, for $\veps \neq \tfrac{1}{4}$ the block matrix $S$ has eigenvalues $\eta^+=\tfrac{a}{2} + a\sqrt{\tfrac{1}{4} - \veps}$ and $\eta^-= \tfrac{a}{2} - a\sqrt{\tfrac{1}{4} - \veps}$. They are positive if $\veps<\tfrac{1}{4}$, and they are non-real if $\tfrac{1}{4} < \veps < 1$.
In these last two cases $S$ is diagonalizable.
\end{ex}

\section{Application to $J$-frame operators}
\label{Krein}

In this section, we exploit Theorems~\ref{Thm:exist-contr2} and~\ref{Thm:SpecS2} to investigate whether a block matrix~$S$ as in~\eqref{eq:blockS} represents a so-called $J$-frame operator and when it is similar to a Hermitian matrix. In the following we briefly recall the concept of $J$-frame operators, which arose in~\cite{GLLMMT,GMMM12} in the context of frame theory in Krein spaces.

In a finite-dimensional setting, every indefinite inner product space is a (finite-dimensional) Krein space,
see \cite{GLR05}.
A map $\Skindef: \dC^k \times \dC^k \to \dC$ is called an indefinite inner product in $\dC^k$, if it is a non-degenerate Hermitian sesquilinear form.
The indefinite inner product allows a classification of vectors: $x\in\dC^k$ is called positive if $\K{x}{x} > 0$, negative if $\K{x}{x} < 0$ and neutral if $\K{x}{x} = 0$.
Also, a subspace $\cL$ of $\dC^k$ is positive if every $x \in\cL \setminus \{0\}$ is a positive vector. Negative and neutral subspaces are defined analogously.
A positive (negative) subspace of maximal dimension will be called maximal positive
(maximal negative, respectively).

It is well-known that there exists a Gramian (or Gram matrix) $G\in\dC^{k\times k}$, which is Hermitian, invertible and represents $\Skindef$ in terms of the usual inner product in $\dC^k$, i.e., $\K{x}{y} = \langle Gx, y\rangle$ for all $x,y\in\C^k$.
The positive (resp.\ negative) index of inertia of $\Skindef$ is the number of positive (resp.\ negative) eigenvalues of the Gramian~$G$, and it equals the dimension of any maximal positive (resp.\ negative) subspace of $\dC^k$. It is clear that the sum of the inertia indices equals the dimension of the space.

A finite family of vectors $\cF=\{f_i\}_{i=1}^q$ in $\dC^k$ is a \emph{frame for $\dC^k$}, if
\begin{equation*}\label{eq:frame}
\linspan(\{f_i\}_{i=1}^q) =\dC^k
\end{equation*}
(see, e.g., \cite{CK}) or, equivalently, if there exist $0<\alpha \leq \beta$ such that
\[
  \alpha \|f\|^2\ \leq \sum_{i=1}^q \big|\PI{f}{f_i}\big|^2
  \leq \beta \|f\|^2\ \qquad \text{for every $f\in \dC^k$}.
\]
The optimal set of constants $0<\alpha \leq \beta$ (the biggest $\alpha$ and the smallest~$\beta$) are called the frame bounds of $\cF$. If
\begin{eqnarray}\label{frame-op}
F:\dC^k\ra \dC^k,\ f\mapsto \sum_{i=1}^q \PI{f}{f_i}f_i
\end{eqnarray}
is the associated \emph{frame operator}, then the frame bounds of $\cF$ are
\[
\alpha=\|F^{-1}\|^{-1}=\la_k^\downarrow(F) \quad \text{and} \quad \beta=\|F\|=\la_1^\downarrow(F),
\]
see e.g.~\cite{CK} and the references therein.

Roughly speaking, a $J$-frame is a frame which is compatible with the indefinite inner product $\Skindef$.

\begin{defn}
Let $(\C^{k},\Skindef)$ be an indefinite inner product space. Then, a frame $\cF=\{f_i\}_{i=1}^q$ in $\dC^k$ is called  a \emph{$J$-frame for $\dC^k$}, if
\[
\cM_+ \!:=\linspan\setdef{\!\!f\in \cF\!\!}{\!\!\K{f}{f}\geq 0\!\!} \ \ \mbox{and} \ \
\cM_- \!:=\linspan\setdef{\!\!f\in \cF\!\!}{\!\!\K{f}{f}< 0\!\!}
\]
are a maximal positive and a maximal negative subspace of $\dC^k$, respectively.
\end{defn}

If $\Skindef$ is an indefinite inner product with positive and negative index of inertia $n$ and $m$, respectively, then the maximality of $\cM_+$ and $\cM_-$ is equivalent to $\dim \cM_+=n$ and $\dim \cM_-=m$. Note that if $\cF$ is a $J$-frame for $\dC^k$, then there are no (non-trivial) $f\in \cF$ with
$\K{f}{f}=0$.

Given a $J$-frame $\cF=\{f_i\}_{i=1}^q$ for $\dC^{k}$, its associated $J$-frame operator $S:\C^k\to\C^k$ is defined by
\[
Sf=\sum_{i=1}^q \sigma_i \K{f}{f_i}f_i,\quad f\in\dC^k,
\]
where $\sigma_i=\sgn\K{f_i}{f_i}$ is the signature of the vector $f_i$. $S$ is an invertible symmetric operator with respect to
$\Skindef$, i.e.,
\begin{equation*}\label{sym}
	\K{Sf}{g}=  \K{f}{Sg} \ \ \ \text{for all} \ \ \ f,g \in \dC^k.
\end{equation*}
 Its relevance follows from the indefinite sampling-reconstruction formula: Given an arbitrary $f\in\C^k$,
\[
\quad f=\sum_{i=1}^q \sigma_i \K{f}{S^{-1}f_i}f_i=\sum_{i=1}^q \sigma_i \K{f}{f_i}S^{-1}f_i,
\]
i.e., it plays a role analogous to the fame operator $F$ in equation~\eqref{frame-op}.

In the following, we aim to apply the results from Sections~\ref{Schur semi} and~\ref{Spec},
hence we restrict ourselves to the following inner product
on $\dC^k=\dC^{n+m}$,
\begin{equation}\label{iip}
\K{(x_1,\ldots, x_{n+m})}{(y_1,\ldots, y_{n+m})}= \sum_{i=1}^n x_i \ol{y_i} - \sum_{j=1}^m x_{n+j}\ol{y_{n+j}}.
\end{equation}
In \cite[Theorem 3.1]{GLLMMT} a criterion was provided to determine if an (invertible) symmetric operator is a $J$-frame operator. In our setting it says that an invertible operator $S$ in $(\dC^{k},\Skindef)$,
 which is symmetric with respect to $\Skindef$, is a $J$-frame operator if and only if there exists a basis of $\dC^{k}$ such that
$S$ can be represented as a block-matrix
\begin{equation}\label{eq:blockS again}
	S=\matriz{A}{-AK}{K^*A}{D},
\end{equation}
where $A\in\dC^{n\times n}$ is positive definite, $K\in\dC^{n\times m}$ is strictly contractive, and $D\in\dC^{m\times m}$ is a Hermitian matrix such that $D+K^*AK$ is also positive definite. Any block-matrix $S\in \dC^{(n+m)\times(n+m)}$ of the form~\eqref{eq:blockS again}, which satisfies these conditions will be called \emph{$J$-frame matrix}.

Throughout this section we consider the following hypothesis.

\begin{ass}\label{assumption}
  Assume that $A\in \dC^{n\times n}$ is positive definite and $D\in \dC^{m\times m}$ is a Hermitian matrix. Let $\mu_1\leq\mu_2\leq\ldots\leq\mu_r\le 0 < \mu_{r+1}\leq \ldots \le \mu_m$ denote the eigenvalues of~$D$ (counted with multiplicities) arranged in nondecreasing order, and let $\la_1\geq\la_2\geq\ldots\geq\la_n>0$ denote the eigenvalues of~$A$ (counted with multiplicities) arranged in nonincreasing order.
\end{ass}

Theorem~\ref{Thm:exist-contr2} (for $\kappa=1$) provides a criterion to determine whether there exists a strictly contractive matrix $K\in\dC^{n\times m}$ (i.e., $\|K\|<1$) such that~$S$ as in~\eqref{eq:blockS again} is a $J$-frame matrix.

\begin{thm}\label{Thm:JframeMatrices}
Let Assumption~\ref{assumption} hold. Then
there exists $K\in \dC^{n\times m}$ with $\|K\|<1$ such that~$S$ as in~\eqref{eq:blockS again}
is a $J$-frame matrix if and only if
\begin{equation}\label{eq:EsoEs}
r\leq n \ \ \  \text{and} \ \ \ \la_i + \mu_i>0 \ \ \ \text{for $i=1,\ldots,r$}.
\end{equation}
\end{thm}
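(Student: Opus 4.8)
The plan is to reduce this statement directly to Theorem~\ref{Thm:exist-contr2}(i) in the special case $\kappa = 1$. First I would unpack the definition of a $J$-frame matrix: a block matrix of the form~\eqref{eq:blockS again} is a $J$-frame matrix precisely when $A$ is positive definite, $K$ is strictly contractive, $D$ is Hermitian, and the Schur complement $D + K^*AK$ is positive definite. Under Assumption~\ref{assumption} the positive definiteness of $A$ and the Hermiticity of $D$ are already granted, so the existence of a strictly contractive $K$ making $S$ a $J$-frame matrix is \emph{equivalent} to the existence of $K$ with $\|K\| < 1$ such that $D + K^*AK$ is positive definite. This is exactly the question answered by Theorem~\ref{Thm:exist-contr2}(i).

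Next I would specialize that theorem. Since $A$ is positive definite, all of its eigenvalues are positive, so the number $k$ of positive eigenvalues equals $n$. Applying Theorem~\ref{Thm:exist-contr2}(i) with $\kappa = 1$ and $k = n$ then gives that such a $K$ exists if and only if
\[
r \leq n \quad \text{and} \quad \lambda_i + \mu_i > 0 \quad \text{for all } i = 1, \ldots, r - p,
\]
where $p = \dim(\ker D)$.

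It remains to reconcile this with the condition~\eqref{eq:EsoEs}, which ranges over $i = 1, \ldots, r$ rather than $i = 1, \ldots, r-p$. The only subtle point in the whole argument is this bookkeeping, and it is resolved by locating the zero eigenvalues of $D$. By Assumption~\ref{assumption} the eigenvalues of $D$ are arranged so that $\mu_r \leq 0 < \mu_{r+1}$; hence the $p$ zero eigenvalues of $D$ must be precisely the $p$ largest among the non-positive ones, that is, $\mu_{r-p+1} = \cdots = \mu_r = 0$. For each index $i \in \{r-p+1, \ldots, r\}$ we therefore have $\lambda_i + \mu_i = \lambda_i > 0$ automatically, because $A$ is positive definite. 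Consequently the requirement $\lambda_i + \mu_i > 0$ for $i = 1, \ldots, r-p$ is equivalent to the requirement $\lambda_i + \mu_i > 0$ for $i = 1, \ldots, r$, which is exactly~\eqref{eq:EsoEs}.

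I do not anticipate any genuine obstacle, since the statement is essentially a specialization of Theorem~\ref{Thm:exist-contr2}(i) to the positive definite case $A > 0$ combined with the observation that a $J$-frame matrix is characterized by positive definiteness of its Schur complement. The one place demanding a little care is precisely the index-range adjustment above, and it turns entirely on the fact that the indices $r-p+1, \ldots, r$ correspond to zero eigenvalues of $D$, for which the inequality $\lambda_i + \mu_i > 0$ is vacuously satisfied.
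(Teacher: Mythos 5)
Your proposal is correct and follows essentially the same route as the paper, which states Theorem~\ref{Thm:JframeMatrices} as a direct specialization of Theorem~\ref{Thm:exist-contr2}(i) with $\kappa=1$ and $k=n$ (the paper gives no further proof). Your careful handling of the index range, namely that $\mu_{r-p+1}=\cdots=\mu_r=0$ so that $\la_i+\mu_i=\la_i>0$ automatically for those indices, correctly fills in the one bookkeeping detail the paper leaves implicit.
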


We mention that the study of the spectral properties of a $J$-frame operator is quite recent, see \cite{GLLMMT,GLMPT}. In the case of $J$-frame matrices, for given $A$ and $D$, we always find
conditions such that a strictly contractive~$K$ exists which turns~$S$ into a matrix similar to a Hermitian one. The following result is a direct consequence of Theorem~\ref{Thm:SpecS2} and Lemma~\ref{Lem:alpha2}.

\begin{thm}\label{CenarEsDerecho}
Let Assumption~\ref{assumption} and~\eqref{eq:EsoEs} hold. Then, there exists a strictly contractive matrix $K$ such that the  matrix~$S$ given in~\eqref{eq:blockS again} is a
$J$-frame matrix which is similar to a Hermitian matrix. In this case, all
eigenvalues of $S$ are positive and there exists a basis of $\dC^{n+m}$ consisting
of eigenvectors of $S$.
\end{thm}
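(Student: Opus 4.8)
The plan is to reduce everything to Theorem~\ref{Thm:SpecS2} with $\kappa=1$ and to make one careful choice of the parameters $\veps_1,\ldots,\veps_r$. First I would observe that Assumption~\ref{assumption} is precisely the special case of Assumption~\ref{assumption2} in which $A$ is positive definite, so that $k=n$; moreover, writing $p=\dim(\ker D)$, for the indices $i=r-p+1,\ldots,r$ one has $\mu_i=0$ and hence $\la_i+\mu_i=\la_i>0$ automatically. Consequently condition~\eqref{eq:EsoEs} is exactly condition~\eqref{EsoEs3} for $\kappa=1$, so the existence of a strictly contractive $K$ turning $S$ into a $J$-frame matrix is already furnished by Theorem~\ref{Thm:exist-contr2}(i) (cf.\ Theorem~\ref{Thm:JframeMatrices}). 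The extra content to be proved is that $K=UEV^{*}$ from~\eqref{eq:defK2} can be chosen so that $S$ is in addition diagonalizable with strictly positive spectrum.

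The key step is to pick, for each $i=1,\ldots,r$, a parameter $\veps_i$ in the open interval $\big(\tfrac{-\mu_i}{\la_i},\,\alpha_i\big)$. This interval is nonempty: Lemma~\ref{Lem:alpha2} gives $\tfrac{-\mu_i}{\la_i}\le\alpha_i$, and a short computation shows $\alpha_i-\tfrac{-\mu_i}{\la_i}=\tfrac{(\la_i+\mu_i)^2}{4\la_i^2}$, which is strictly positive precisely because~\eqref{eq:EsoEs} forces $\la_i+\mu_i>0$ (for the indices $i>r-p$ this reads $0<\veps_i<\alpha_i=\tfrac14$). With such a choice $\veps_i\la_i+\mu_i>0$, so $K$ is strictly contractive and $D+K^{*}AK$ is positive definite, confirming that $S$ is a $J$-frame matrix; and since $\veps_i<\alpha_i<1=\kappa^2$, we sit in case~b) of Theorem~\ref{Thm:SpecS2}, so the eigenvalues $\eta_i^{\pm}$ in~\eqref{ETFLuz2} are real and $\veps_i\neq\alpha_i$.

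Next I would verify positivity of the whole spectrum. The eigenvalues $\la_{r+1},\ldots,\la_n$ are positive because $A$ is positive definite, and $\mu_{r+1},\ldots,\mu_m$ are positive by Assumption~\ref{assumption}. For the remaining pairs it suffices to bound $\eta_i^{-}$ from below: the strict inequality $\veps_i>\tfrac{-\mu_i}{\la_i}$ yields $\sqrt{\alpha_i-\veps_i}<\tfrac{\la_i+\mu_i}{2\la_i}$, whence
\[
\eta_i^{-}=\tfrac{\la_i+\mu_i}{2}-\la_i\sqrt{\alpha_i-\veps_i}>0,
\]
and a fortiori $\eta_i^{+}>\eta_i^{-}>0$. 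Thus every eigenvalue of $S$ is positive. Finally, since $\veps_i\neq\alpha_i$ for all $i$, the last assertion of Theorem~\ref{Thm:SpecS2} shows that $S$ is diagonalizable; a diagonalizable matrix with real spectrum is similar to its real diagonal form and hence to a Hermitian matrix, and the diagonalizing eigenvectors furnish the claimed basis of $\dC^{n+m}$.

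I do not expect a serious obstacle, as the statement is essentially a bookkeeping consequence of Theorem~\ref{Thm:SpecS2}. The only point requiring genuine care is checking that the admissible interval $\big(\tfrac{-\mu_i}{\la_i},\alpha_i\big)$ is nonempty and that it strictly avoids both the boundary $\veps_i=\tfrac{-\mu_i}{\la_i}$ (at which $\eta_i^{-}=0$, destroying positivity) and the critical value $\veps_i=\alpha_i$ (at which a Jordan block of length~$2$ appears, destroying diagonalizability); both exclusions are guaranteed exactly by the strict inequality $\la_i+\mu_i>0$ in~\eqref{eq:EsoEs}.
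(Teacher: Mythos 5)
Your proposal is correct and follows exactly the route the paper intends: the paper states Theorem~\ref{CenarEsDerecho} as a direct consequence of Theorem~\ref{Thm:SpecS2} and Lemma~\ref{Lem:alpha2}, and your choice $\veps_i\in\bigl(\tfrac{-\mu_i}{\la_i},\alpha_i\bigr)$ (nonempty precisely because $\alpha_i-\tfrac{-\mu_i}{\la_i}=\tfrac{(\la_i+\mu_i)^2}{4\la_i^2}>0$ under~\eqref{eq:EsoEs}) is exactly the instantiation of case~b) with $\kappa=1$ that makes this work. Your added care at the two boundary values ($\veps_i=\tfrac{-\mu_i}{\la_i}$ giving $\eta_i^-=0$, and $\veps_i=\alpha_i$ giving a Jordan block) is the right bookkeeping that the paper leaves implicit.
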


In the next paragraphs we recall how to construct $J$-frames for $\dC^{n+m}$ with a prescribed $J$-frame matrix $S$. For $K\in\dC^{n\times m}$ with $\|K\|<1$ define
\begin{equation}\label{eq:M+-}
\cM_-:= \{0\}\times \dC^m,\quad \cM_+:=\setdef{\begin{pmatrix} f\\ K^*f\end{pmatrix}}{f\in\dC^n}.
\end{equation}
If $\dC^{n+m}=\dC^n\times\dC^m$ is endowed with the indefinite inner product given in \eqref{iip}, then it is immediate that $\cM_-$ is a maximal negative subspace in $\dC^{n+m}$ and $\cM_+$ is maximal positive in $\dC^{n+m}$. The contraction $K\in\dC^{n\times m}$ represents the angle between the two subspaces $\cM_+$ and $\cM_-$.

Moreover, if $K$ with $\|K\|<1$ is such that the block matrix $S$ given in~\eqref{eq:blockS again} is a $J$-frame matrix, consider $S=S_+ + S_-$ with
\begin{equation}\label{eses}
S_+=\matriz{A}{-AK}{K^*A}{-K^*AK} \quad \text{and} \quad S_-=\matriz{0}{0}{0}{D+K^*AK}.
\end{equation}
Then, the restriction of $S_+$ to $(\cM_+, \Skindef)$ is a positive definite matrix. Indeed, if $f\in\dC^n\setminus\{0\}$, then
\begin{align}
\K{S_+\begin{pmatrix} f \\ K^*f \end{pmatrix}}{\begin{pmatrix} f \\ K^*f \end{pmatrix}} &=\K{\begin{pmatrix} A(I-KK^*)f \\ K^*A(I-KK^*)f\end{pmatrix}}{\begin{pmatrix} f \\ K^*f \end{pmatrix}} \nonumber\\
&=\PI{A(I-KK^*)f}{f}-\PI{KK^*A(I-KK^*)f}{f} \nonumber\\
&=\PI{(I-KK^*)A(I-KK^*)f}{f} >0. \label{CFK}
\end{align}
On the other hand, it is evident that the restriction of $S_-$ to $(\cM_-, -\Skindef)$ is just $D+K^*AK$, which is also a positive definite matrix.

Therefore, it is possible to construct frames $\cF_\pm$ for the (finite-dimensional) Hilbert spaces $(\cM_\pm, \pm\Skindef)$ with these matrices as frame operators, see \cite{CL}. Moreover, the family $\cF_+\cup\cF_-$ is a $J$-frame for $\dC^{n+m}$ with $S$ as its $J$-frame operator, see \cite[Theorem 5.6]{GMMM12}.

\begin{prop}\label{pr:Jframebounds}
Let Assumption~\ref{assumption} hold and let $K\in \dC^{n\times m}$ with $\|K\|<1$ be such that~$S$ as in~\eqref{eq:blockS again} is a $J$-frame matrix. Further, let $\cM_\pm$ be as in~\eqref{eq:M+-} and let $\cF_\pm$ be frames for $(\cM_\pm, \pm\Skindef)$ with frame operator $S_\pm$ given in~\eqref{eses}. Then, the frame bounds of $\cF_-$ are
\begin{equation}
\alpha_-= \la_m^\downarrow (D+K^*AK) \qquad \text{and} \qquad \beta_-=\la_1^\downarrow (D+K^*AK),
\end{equation}
and the frame bounds of $\cF_+$ are the boundary values of the numerical range of the positive definite matrix $C:=(I-KK^*)^{1/2}A(I-KK^*)^{1/2}\in \dC^{n\times n}$,
\begin{equation}\label{IAM}
\alpha_+=\la_n^\downarrow (C) \qquad \text{and}
\qquad \beta_+=\la_1^\downarrow (C).
\end{equation}
\end{prop}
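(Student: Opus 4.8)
The plan is to use the general fact recalled earlier in this section, that the (optimal) frame bounds of a frame coincide with the smallest and largest eigenvalues of its frame operator, regarded as a positive definite self-adjoint operator on the underlying Hilbert space; equivalently, they are the two endpoints of the numerical range of that operator. Here the underlying Hilbert spaces are $(\cM_-,-\Skindef)$ and $(\cM_+,\Skindef)$, and the frame operators are the restrictions $S_-\restr\cM_-$ and $S_+\restr\cM_+$, which are positive definite by construction (for $S_+\restr\cM_+$ this is exactly the computation~\eqref{CFK}). So in each case I would transport the frame operator to a standard matrix acting on $\dC^m$ or $\dC^n$ and read off the endpoints of its numerical range.

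The $\cF_-$ case is immediate. Since $\cM_-=\{0\}\times\dC^m$ and $-\K{(0,x)}{(0,y)}=\PI{x}{y}$, the map $x\mapsto(0,x)$ is a unitary from $\dC^m$ with its standard inner product onto $(\cM_-,-\Skindef)$, and under it $S_-\restr\cM_-$ becomes the positive definite matrix $D+K^*AK$. Its numerical range is the interval with endpoints $\la_m^\downarrow(D+K^*AK)$ and $\la_1^\downarrow(D+K^*AK)$, which gives the stated $\alpha_-$ and $\beta_-$.

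The $\cF_+$ case is the substantive part. Writing $R:=(I-KK^*)^{1/2}$, which is positive definite and invertible because $\|K\|<1$, I would first record that $\Phi\colon f\mapsto(f,K^*f)$ is a bijection of $\dC^n$ onto $\cM_+$ with $\K{\Phi f}{\Phi g}=\PI{R^2 f}{g}$, so $\Phi$ is a unitary from $\dC^n$ equipped with the weighted inner product $\PI{R^2\,\cdot}{\cdot}$ onto $(\cM_+,\Skindef)$. A direct computation gives $S_+\Phi f=\Phi\big(A(I-KK^*)f\big)$, so $S_+$ leaves $\cM_+$ invariant and, transported by $\Phi$, the frame operator becomes the matrix $AR^2$ acting on the weighted space; its numerical range is the set of values of the generalized Rayleigh quotient $\PI{R^2 A R^2 f}{f}/\PI{R^2 f}{f}$. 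I would then remove the weight by the congruence substitution $g=Rf$: the numerator becomes $\PI{RAR\,g}{g}=\PI{Cg}{g}$ and the denominator becomes $\PI{g}{g}$, so the quotient equals $\PI{Cg}{g}/\PI{g}{g}$. Hence the numerical range of the frame operator of $\cF_+$ coincides with the numerical range of the Hermitian positive definite matrix $C=RAR$, whose endpoints are $\la_n^\downarrow(C)$ and $\la_1^\downarrow(C)$, yielding $\alpha_+$ and $\beta_+$ as claimed.

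The one delicate point, and the main obstacle to handle carefully, is that $S_+\restr\cM_+$ is self-adjoint with respect to the \emph{indefinite}-induced inner product $\Skindef$ on $\cM_+$ rather than the standard one, so its spectral data must be read off in the weighted space. The substitution $g=Rf$—equivalently, the similarity $AR^2=R^{-1}CR$, which shows that the transported frame operator and $C$ share the same eigenvalues—is precisely what converts this generalized numerical-range problem into the standard one for $C$, and checking that this transformation respects the frame-bound characterization is the crux of the argument.
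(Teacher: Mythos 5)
Your argument is correct and essentially coincides with the paper's proof: the heart of the matter, the $\cF_+$ case, is handled by the very same substitution $g=(I-KK^*)^{1/2}f$, which identifies the numerical range of $S_+\restr\cM_+$ on $(\cM_+,\Skindef)$ with the numerical range of $C=(I-KK^*)^{1/2}A(I-KK^*)^{1/2}$, whose endpoints are the frame bounds. The only difference is cosmetic: for $\cF_-$ you argue directly via the unitary $x\mapsto(0,x)$ transporting $S_-\restr\cM_-$ to $D+K^*AK$, whereas the paper cites \cite[Proposition 4.1]{GLLMMT} for that (equally immediate) case.
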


\begin{proof}
Recall $g\in\cM_+$ if and only if $g=\begin{smallpmatrix} f \\ K^*f \end{smallpmatrix}$ for some $f\in\dC^n$. Then,
\[
\|g\|^2=\K{\begin{pmatrix} f \\ K^*f \end{pmatrix}}{\begin{pmatrix} f \\ K^*f \end{pmatrix}}=\PI{(I-KK^*)f}{f}=\|(I-KK^*)^{1/2}f\|^2.
\]
On the other hand, if $h=(I-KK^*)^{1/2}f\in\dC^n$, by \eqref{CFK} we have that
\begin{align*}
\K{S_{+} g}{g} &=\PI{(I-KK^*)A(I-KK^*)f}{f}\\
&=\PI{C(I-KK^*)^{1/2}f}{(I-KK^*)^{1/2}f}=\PI{Ch}{h}.
\end{align*}
Since $\|h\|=\|g\|$, it is immediate that the numerical ranges of $S_+$ and $C$ coincide.
Therefore, \eqref{IAM} holds.

On the other hand, the desired characterization of the frame bounds $\alpha_-$ and $\beta_-$ of $\cF_-$ has been obtained in \cite[Proposition 4.1]{GLLMMT}.
\end{proof}

Using Weyl's inequalities and the inequalities for the singular values of a product of matrices presented in \eqref{sing val} we can obtain the following a priori estimates for the frame bounds of $\cF_+$ and $\cF_-$.

\begin{prop}
Let Assumption~\ref{assumption} and~\eqref{eq:EsoEs} hold and let $K\in \dC^{n\times m}$ with $\|K\|<1$ be such that~$S$ as in~\eqref{eq:blockS again} is a $J$-frame matrix. Further, let $\cM_\pm$ be as in~\eqref{eq:M+-} and let $\cF_\pm$ be frames for $(\cM_\pm, \pm\Skindef)$ with frame operator $S_\pm$ given in~\eqref{eses}. If $\sigma_1\geq\ldots\geq\sigma_l>0$ are the singular values of $K$, then the frame bounds of $\cF_-$ satisfy
\[
0 < \alpha_-\leq \beta_-\leq \sigma_1^2\la_1 + \mu_m.
\]
Furthermore, the frame bounds of $\cF_+$ satisfy
\[
(1-\sigma_1^2)\la_n\leq \alpha_+\leq\beta_+\leq (1-\sigma_l^2)\la_1.
\]
\end{prop}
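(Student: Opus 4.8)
The plan is to feed the exact expressions for the four frame bounds supplied by Proposition~\ref{pr:Jframebounds} into the two estimation tools already developed: Weyl's inequalities (Theorem~\ref{Thm:Weyl}) together with Proposition~\ref{HierIstEsIrreWarm} handle the bounds for $\cF_-$, while the singular value product inequality~\eqref{sing val} handles the bounds for $\cF_+$. So the whole argument is a matter of applying known estimates to the matrices $D+K^*AK$ and $C=(I-KK^*)^{1/2}A(I-KK^*)^{1/2}$, and then reading off the extreme eigenvalues of $I-KK^*$ in terms of the singular values of $K$.

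First I would treat $\cF_-$, where $\alpha_-=\la_m^\downarrow(D+K^*AK)$ and $\beta_-=\la_1^\downarrow(D+K^*AK)$. The inequality $\alpha_->0$ is immediate because $S$ being a $J$-frame matrix forces $D+K^*AK$ to be positive definite, and $\alpha_-\le\beta_-$ is just the ordering of eigenvalues. For the upper estimate I apply the first Weyl inequality of Theorem~\ref{Thm:Weyl} with $i=j=1$ to the sum $D+K^*AK$, getting $\la_1^\downarrow(D+K^*AK)\le\la_1^\downarrow(D)+\la_1^\downarrow(K^*AK)=\mu_m+\la_1^\downarrow(K^*AK)$. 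Since $A$ is positive definite (so $k=n$) and $K\neq 0$, Proposition~\ref{HierIstEsIrreWarm} with $j=1$ gives $\la_1^\downarrow(K^*AK)\le\|K\|^2\la_1^\downarrow(A)=\sigma_1^2\la_1$, and combining the two yields $\beta_-\le\sigma_1^2\la_1+\mu_m$.

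Next I would treat $\cF_+$, where $\alpha_+=\la_n^\downarrow(C)$ and $\beta_+=\la_1^\downarrow(C)$. The key move is to factor $C=M^*M$ with $M:=A^{1/2}(I-KK^*)^{1/2}$, so that $\la_j^\downarrow(C)=\sigma_j(M)^2$; note $M$ is invertible because $A>0$ and $\|K\|<1$. For the upper bound I use~\eqref{sing val} with $i=j=1$ to obtain $\beta_+=\sigma_1(M)^2\le\big(\sigma_1(A^{1/2})\,\sigma_1((I-KK^*)^{1/2})\big)^2=\la_1\cdot\la_1^\downarrow(I-KK^*)$. For the lower bound I pass to the inverse: $\alpha_+=\sigma_n(M)^2=\sigma_1(M^{-1})^{-2}$ with $M^{-1}=(I-KK^*)^{-1/2}A^{-1/2}$, and~\eqref{sing val} gives $\sigma_1(M^{-1})\le\sigma_1((I-KK^*)^{-1/2})\sigma_1(A^{-1/2})=(1-\sigma_1^2)^{-1/2}\la_n^{-1/2}$, whence $\alpha_+\ge(1-\sigma_1^2)\la_n$.

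The final step is to translate the extreme eigenvalues of $I-KK^*$ into the singular values of $K$: the eigenvalues of $KK^*$ are $\sigma_1^2\ge\dots\ge\sigma_l^2>0$ together with further zeros, so its largest eigenvalue is $\sigma_1^2$, giving $\la_n^\downarrow(I-KK^*)=1-\sigma_1^2$, and the required identification $\la_1^\downarrow(I-KK^*)=1-\sigma_l^2$ completes both $\cF_+$ displays. I expect this last identification to be the main obstacle and the point to double-check carefully: it hinges on whether $KK^*$ has a zero eigenvalue, since if $\rank(K)<n$ then $\la_1^\downarrow(I-KK^*)=1$ rather than $1-\sigma_l^2$. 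Thus the delicate bookkeeping is to verify that $\sigma_l$ is indeed the eigenvalue of $KK^*$ producing the largest eigenvalue of $I-KK^*$ (equivalently that $KK^*$ is nonsingular in the relevant regime), which is where I would concentrate the rigor.
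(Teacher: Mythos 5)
Your plan follows the paper's proof almost step for step: the $\cF_-$ bounds via Theorem~\ref{Thm:Weyl} with $i=j=1$ and Proposition~\ref{HierIstEsIrreWarm} (with $k=n$ because $A>0$) are exactly the paper's, as is the upper bound $\beta_+\le \la_1\,\la_1^\downarrow(I-KK^*)$ from \eqref{sing val}; for $\alpha_+$ the paper multiplies and divides by $\sigma_1(A^{-1/2})^2$ and applies \eqref{sing val} to the product $A^{-1/2}\cdot A^{1/2}(I-KK^*)^{1/2}$, which is the same estimate you obtain by passing to $M^{-1}=(I-KK^*)^{-1/2}A^{-1/2}$. Your identification $\la_n^\downarrow(I-KK^*)=1-\sigma_1^2$ is also unproblematic, since $\la_1^\downarrow(KK^*)=\sigma_1^2$ holds irrespective of the rank of $K$.

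The step you singled out as needing rigor is, however, a genuine gap that cannot be closed: $\la_1^\downarrow(I-KK^*)=1-\sigma_l^2$ holds if and only if $\rank(K)=n$. If $l=\rank(K)<n$, then $KK^*\in\dC^{n\times n}$ has $0$ as an eigenvalue, so $\la_1^\downarrow(I-KK^*)=1$ and your chain of inequalities only yields $\beta_+\le\la_1$. Worse, the stated bound is then simply false: take $n=2$, $m=1$, $A=I_2$, $D=[\mu]$ with $\mu>0$ (so $r=0$ and \eqref{eq:EsoEs} holds vacuously) and $K=\smallvek{\sigma}{0}$ with $0<\sigma<1$; then $S$ is a $J$-frame matrix, $C=(I-KK^*)^{1/2}A(I-KK^*)^{1/2}=\diag(1-\sigma^2,1)$, hence $\beta_+=1$, while $(1-\sigma_l^2)\la_1=1-\sigma^2<1$. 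So your suspicion is exactly right, but no amount of bookkeeping will verify that $KK^*$ is nonsingular---it need not be under the stated hypotheses, and it never is when $m<n$. For what it is worth, the paper's own proof commits precisely this oversight, writing $\la_1^\downarrow(I-KK^*)=1-\sigma_l^2$ without comment: the proposition's last inequality is correct only under the additional hypothesis $\rank(K)=n$ (which in particular forces $m\ge n$), and otherwise must be weakened to $\beta_+\le\la_1\,\la_1^\downarrow(I-KK^*)=\la_1$. Everything else in your proposal (both $\cF_-$ bounds and the lower bound for $\alpha_+$) is complete and correct.
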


\begin{proof}
By Proposition \ref{pr:Jframebounds} we have $\alpha_-= \la_m^\downarrow (D+K^*AK)>0$.
Furthermore, by Theorem~\ref{Thm:Weyl} and Proposition~\ref{HierIstEsIrreWarm} we have
\begin{align*}
\beta_- &=  \la_1^\downarrow (D+K^*AK) \leq \la_1^\downarrow (D) + \la_1^\downarrow (K^*AK) \\
&\leq \la_1^\downarrow (D) + \|K\|^2\la_1^\downarrow (A)=\mu_m + \|K\|^2 \la_1= \sigma_1^2\la_1 + \mu_m.
\end{align*}
On the other hand, if $C=(I-KK^*)^{1/2}A(I-KK^*)^{1/2}$, then $\alpha_+=\la_n^\downarrow (C)$ and $\beta_+=\la_1^\downarrow (C)$. Hence, using~\eqref{sing val} we obtain
\begin{align*}
\alpha_+ &= \la_n^\downarrow (C) =\sigma_n(A^{1/2}(I-KK^*)^{1/2})^2 = \tfrac{\sigma_1(A^{-1/2})^2}{\sigma_1(A^{-1/2})^2} \sigma_n(A^{1/2}(I-KK^*)^{1/2})^2 \\ &\geq \tfrac{\sigma_n((I-KK^*)^{1/2})^2}{\sigma_1(A^{-1/2})^2}=\la_n^\downarrow (I-KK^*)\la_n^\downarrow (A)=(1-\sigma_1^2)\la_n,
\end{align*}
and further
\begin{align*}
\beta_+ &=  \la_1^\downarrow (C)= \sigma_1(A^{1/2}(I-KK^*)^{1/2})^2 \leq \sigma_1(A^{1/2})^2\sigma_1((I-KK^*)^{1/2})^2 \\ &= \la_1^\downarrow (A) \la_1^\downarrow (I-KK^*)= \la_1 (1-\sigma_l^2),
\end{align*}
which completes the proof.
\end{proof}

Finally, let $A\in\dC^{n\times n}$ and $D\in\dC^{m\times m}$
 satisfy Assumption~\ref{assumption} and assume that~\eqref{eq:EsoEs} holds.
For $i=1,\ldots,r$ choose $0<\veps_i<1$ such that $\veps_i \la_i + \mu_i >0$.
If $A=UD_\la U^*$, $D=VD_\mu V^*$ and $K\in \dC^{n\times m}$ is given by~\eqref{eq:defK2}
then $\|K\|<1$,
\begin{align*}
C &=(I-KK^*)^{1/2}A(I-KK^*)^{1/2}=\\
&=U\matriz{\diag((1-\veps_1)\la_1,\ldots,(1-\veps_r)\la_r)}{0_{r,m-r}}{0_{n-r,r}}{\diag(\la_{r+1},\ldots,\la_n)}U^*,
\end{align*}
and
\begin{align*}
D+K^*AK &= V\matriz{\diag(\veps_1 \la_1 +\mu_1,\ldots, \veps_r \la_r +\mu_r)}{0_{r,m-r}}{0_{m-r,r}}{\diag(\mu_{r+1},\ldots,\mu_m)}V^*.
\end{align*}
\normalsize
Then, we can explicitly compute the frame bounds for $\cF_+$ and $\cF_-$:
\begin{itemize}
\item $\alpha_+= \min\{ (1-\veps_1)\la_1,\ldots,(1-\veps_r)\la_r, \la_n\}$;
\item $\beta_+=\max\{(1-\veps_1)\la_1,\ldots,(1-\veps_r)\la_r, \la_{r+1}\}$;
\item $\alpha_-=\min\{\veps_1 \la_1 +\mu_1,\ldots, \veps_r \la_r +\mu_r, \mu_{r+1}\}$;
\item $\beta_-=\max\{\veps_1 \la_1 +\mu_1,\ldots, \veps_r \la_r +\mu_r, \mu_m\}$.
\end{itemize}

Observe that, since $(1-\veps_i)\la_i< \la_i + \mu_i$ and $\veps_i\la_i + \mu_i< \la_i + \mu_i$ for each $i=1,\ldots,r$, we can obtain the following a priori estimates for the lower frame bounds of $\cF_+$ and $\cF_-$:
\[
\alpha_+ \leq \min\{ \la_1 + \mu_1,\ldots, \la_r + \mu_r, \la_n\},
\]
and
\[
\alpha_- \leq \min\{ \la_1 + \mu_1,\ldots, \la_r + \mu_r, \mu_{r+1}\},
\]
which are independent of the strictly contractive matrix~$K$ given in \eqref{eq:defK2}, i.e. independent of the angle between the subspaces $\cM_+$ and $\cM_-$.

\end{document}